\newdimen\plusheight
\def\+{\;\lower\plusheight\hbox{$+$}\;}
\newdimen\minusheight
\def\-{\;\lower\minusheight\hbox{$-$}\;}
\newdimen\cdotsheight
\def\cds{\lower\cdotsheight\hbox{$\cdots$}}
\renewcommand{\(}{\left\(}
\renewcommand{\)}{\right\)}
\renewcommand{\pmod}[1]{\,(\textup{mod}\,#1)}
\numberwithin{equation}{section}
\theoremstyle{plain}
\newtheorem{theorem}{Theorem}[section]
\newtheorem{lemma}[theorem]{Lemma}
\newtheorem*{theorem*}{Theorem}
\begin{document}
  
 \begin{center}
	{\bf{Some New Congruences Modulo Powers of 2 For $(j,k)$-Regular Overpartition
	}}
	\vspace{.5cm}
	\\Riyajur Rahman and  Nipen Saikia$^\ast$ 
	
	\vspace{.2cm}
	
	Department of Mathematics, Rajiv Gandhi University,\\ Rono Hills, Doimukh, Arunachal Pradesh, India, Pin-791112.\\
	Email(s): riyajurrahman@gmail.com; nipennak@yahoo.com\\
	$^\ast$ \textit{Corresponding author.}
\end{center}
\vskip 5mm
\noindent \textbf{Abstract:} Let  $\overline{p}_{j,k}(n)$ denotes the number of $(j,k)$-regular overpartitions of a positive integer $n$  such that none of the parts is congruent to $j$ modulo $k$. Naika et. al. (2021) proved infinite families of congruences modulo powers of 2 for $\overline{p}_{3,6}(n)$, $\overline{p}_{5,10}(n)$ and $\overline{p}_{9,18}(n)$. In this paper, we obtain infinite  families of congruences modulo power of 2 for $\overline{p}_{4,8}(n)$, $\overline{p}_{6,12}(n)$ and  $\overline{p}_{8,16}(n)$. For example, we prove that, for all integers $n\geq 0$ and $\alpha\geq 0$,  
$$
\overline{p}_{4,8}\left(  5^{2\alpha+1}\left(  16(5n+j)+14\right)  \right) q^n\equiv 0\pmod{64}; \qquad
j=1,2,3,4.$$

\noindent  \textbf{Keywords and Phrases:} $(j,k)$-regular overpartition; congruences; $q$-series identities.
\vskip 0.5cm
\noindent \textbf{2010 MSC:} 11P83; 05A17.

\section{Introduction}
A partition of a positive integer $n$ is a non-increasing sequence of positive integers called parts, whose sum is equal to $n$.  The number of partitions of a non-negative integer $n$ is usually  denoted by $p(n)$ (with  $p(0)=1$) and the generating function is given by  
\begin{equation}\label{i1}\sum_{n=0}^{\infty}p(n)q^n=\dfrac{1}{(q;q)_{\infty}},\end{equation}where, for any complex number $a$, 
\begin{equation}\label{i2}
	(a;q)_{\infty}=\prod_{n=0}^{\infty}(1-aq^n), \quad |q|<1.
\end{equation} 

We will use the notation, for any positive integer $k$,
\begin{equation}\label{fff}
f_k:=(q^k; q^k)_\infty.
  \end{equation}
 An overpartition of a non-negative integer $n$ is a partition of $n$ in which the first occurrence of each parts may be overlined. For example, there are 14 overpartition of 4, namely
$$4,\quad \overline{4},\quad3+1,\quad\overline{3}+1,\quad 3+\overline{1},\quad\overline{3}+\overline{1},\quad2+2,\quad\overline{2}+2,\quad2+1+1,\quad\overline{2}+1+1,\quad2+\overline{1}+1,$$$$\quad\overline{2}+\overline{1}+1,\quad1+1+1+1,\quad\overline{1}+1+1+1.
$$If   $\overline{p}(n)$ denotes the number of overpartition of $n$, then the generating function of $\overline{p}(n)$ is given by
 \begin{equation}
  \sum_{n=0}^{\infty}\overline{p}(n)q^n=\dfrac{(-q;q)_{\infty}}{(q;q)_{\infty}}.\end{equation}
Again,  for any positive integer $\ell$, an $\ell$-regular partition of $n$ is a partition in which  no part is divisible by $\ell$. If $b_\ell(n)$ denotes the number of $\ell$-regular
partitions of $n$ (with $b_\ell(0)$ = 1), then the generating function of $b_\ell(n)$ is given by
\begin{equation}
	\sum_{n=0}^\infty{b_\ell(n)q^n}=\frac{f_\ell}{f_1}.
\end{equation}
Naika et. al.\cite{ms} defined a new overpartition functions known as $(j, k)-$regular overpartition. An overpartition of a positive integer $n$ is said to be  $(j, k)$-regular overpartition if none of the parts is congruent to $j \pmod k$. If $p_{j,k}(n)$ denotes the number of $(j, k)-$regular overpartition of $n$ (with $p_{j,k}(0)$ = 1), then its  generating function  is given by
\begin{equation}\label{p10}
 \sum_{n=0}^{\infty}\overline{p}_{j,k}(n)q^n=\dfrac{(-q;q)_{\infty}(q^j;q^k)_{\infty}}{(q;q)_{\infty}(-q^j;q^k)_{\infty}}.\end{equation}
 For example, the $(4,8)$-regular overpartition of 4 are given by 
$$\quad3+1,\quad\overline{3}+1,\quad 3+\overline{1},\quad\overline{3}+\overline{1},\quad2+2,\quad\overline{2}+2,\quad2+1+1,\quad\overline{2}+1+1,,\quad2+\overline{1}+1,$$$$\quad\overline{2}+\overline{1}+1,\quad1+1+1+1,\quad\overline{1}+1+1+1.
$$
Naika et. al.\cite{ms} obtain many infinite families of congruences modulo powers of 2 for $\overline{p}_{3,6}(n)$, $\overline{p}_{5,10}(n)$ and $\overline{p}_{9,18}(n)$. In this paper, we prove many infinite families of congruences modulo power of 2 for $\overline{p}_{4,8}(n)$, $\overline{p}_{6,12}(n)$ and $\overline{p}_{8,16}(n)$.

\section{Some $q$-Series Identities}
\begin{lemma} The following 2-dissections hold:

 \begin{equation}\label{1}
  \dfrac{1}{f_1^2}=\dfrac{f_8^5}{f_2^5f_{16}^2}+2q\dfrac{f_4^2f_{16}^2}{f_2^5f_8},
 \end{equation}
 
 \begin{equation}\label{2}
  \dfrac{1}{f_1^4}=\dfrac{f_4^{14}}{f_2^{14}f_{8}^4}+4q\dfrac{f_4^2f_{8}^4}{f_2^{10}},
 \end{equation}
 \begin{equation}\label{3}
 f_1^2=\dfrac{f_2f_8^5}{f_4^2f_{16}^2}-2q\dfrac{f_2f_{16}^2}{f_8},
 \end{equation}
 \begin{equation}\label{5}
      \dfrac{f_3^3}{f_1}=\dfrac{f_4^3f_6^2}{f_2^2f_{12}}+q\dfrac{f_{12}^3}{f_4}.
      \end{equation}
      \end{lemma}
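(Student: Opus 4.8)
The plan is to recast each left-hand side in terms of Ramanujan's theta functions $\varphi(q)=\sum_{n=-\infty}^{\infty}q^{n^2}$ and $\psi(q)=\sum_{n=0}^{\infty}q^{n(n+1)/2}$, whose product representations $\varphi(q)=f_2^5/(f_1^2f_4^2)$, $\varphi(-q)=f_1^2/f_2$, $\varphi(-q^2)=f_2^2/f_4$ and $\psi(q)=f_2^2/f_1$ are classical, and then to feed in the single base $2$-dissection $\varphi(q)=\varphi(q^4)+2q\psi(q^8)$ together with the two product identities $\varphi(q)\varphi(-q)=\varphi(-q^2)^2$ and $\varphi(q)^2=\varphi(q^2)^2+4q\psi(q^4)^2$. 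The first three identities then drop out almost mechanically, and only \eqref{5}, whose modulus is $3$ rather than a power of $2$, needs a genuinely different idea.

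For \eqref{3} I would replace $q$ by $-q$ in the base dissection to get $\varphi(-q)=\varphi(q^4)-2q\psi(q^8)$, multiply through by $f_2$, and use $f_1^2=f_2\varphi(-q)$ together with $\varphi(q^4)=f_8^5/(f_4^2f_{16}^2)$ and $\psi(q^8)=f_{16}^2/f_8$; this is exactly \eqref{3}. For \eqref{1} and \eqref{2} the key is to rationalise the reciprocal of $\varphi(-q)$: from $\varphi(q)\varphi(-q)=\varphi(-q^2)^2$ one gets $1/\varphi(-q)=\varphi(q)/\varphi(-q^2)^2$. Since $1/f_1^2=(1/f_2)\,1/\varphi(-q)$ and $\varphi(-q^2)^2=f_2^4/f_4^2$, substituting the base dissection of $\varphi(q)$ and simplifying yields \eqref{1}; squaring, using $1/f_1^4=(1/f_2^2)\,1/\varphi(-q)^2$ and $\varphi(-q^2)^4=f_2^8/f_4^4$, and inserting $\varphi(q)^2=\varphi(q^2)^2+4q\psi(q^4)^2$ with $\varphi(q^2)^2=f_4^{10}/(f_2^4f_8^4)$ and $\psi(q^4)^2=f_8^4/f_4^2$ yields \eqref{2}. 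Each of these is a short bookkeeping of eta quotients once the dissection is in place.

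The main obstacle is \eqref{5}. Here I would use the Borwein cubic theta function $c(q)=3q^{1/3}f_3^3/f_1$, equivalently the lattice representation $\dfrac{f_3^3}{f_1}=\sum_{m,n\in\mathbf{Z}}q^{\,m^2+mn+n^2+m+n}$, and $2$-dissect the double sum by the parity of the exponent $E=m^2+mn+n^2+m+n$. Since $m^2+m$ and $n^2+n$ are always even, $E\equiv mn\pmod{2}$, so the odd part comes precisely from $m,n$ both odd. Writing $m=2r+1$, $n=2s+1$ and shifting $(r,s)$ turns this contribution into $q\sum_{r,s}q^{4(r^2+rs+s^2+r+s)}=q\,f_{12}^3/f_4$, which is the second term of \eqref{5}; the remaining terms, where $mn$ is even, must then be recognised as $f_4^3f_6^2/(f_2^2f_{12})$. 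I expect the real work to be twofold: justifying the lattice representation cleanly, and identifying the even-exponent sub-sum as that particular eta quotient (most transparently by matching it against a companion Borwein-type theta series rather than summing it directly). Everything else is routine $q$-series algebra built on the single dissection $\varphi(q)=\varphi(q^4)+2q\psi(q^8)$.
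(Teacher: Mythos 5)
Your treatment of \eqref{1}--\eqref{3} is correct and complete, but note that it is doing more than the paper does: the paper offers no proof of this lemma at all, simply citing Hirschhorn's book for \eqref{1} (the $2$-dissection of $\varphi(q)$), \eqref{2} (the $2$-dissection of $\varphi(q)^2$) and \eqref{5}, and observing that \eqref{3} follows from \eqref{1} under $q\mapsto -q$. Your derivations check out: $1/f_1^2=(1/f_2)\cdot\varphi(q)/\varphi(-q^2)^2$ combined with $\varphi(q)=\varphi(q^4)+2q\psi(q^8)$ gives \eqref{1} exactly; squaring and inserting $\varphi(q)^2=\varphi(q^2)^2+4q\psi(q^4)^2$ gives \eqref{2}; and multiplying $\varphi(-q)=\varphi(q^4)-2q\psi(q^8)$ by $f_2$ gives \eqref{3}. (Your route to \eqref{3} and the paper's ``replace $q$ by $-q$ in \eqref{1}'' are the same move in different clothing.) The only caveat is that you are resting on the base dissection $\varphi(q)=\varphi(q^4)+2q\psi(q^8)$ and the product identities as classical inputs, which is exactly the level of rigor the paper itself adopts, so this part is fine.

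The genuine gap is in \eqref{5}, and it sits precisely where you predicted. First, a normalization error: the lattice sum is $\sum_{m,n}q^{m^2+mn+n^2+m+n}=3f_3^3/f_1$, not $f_3^3/f_1$ (the exponent vanishes at $(0,0)$, $(-1,0)$ and $(0,-1)$, so the constant term is $3$). This factor of $3$ cancels, since your odd-part computation likewise produces $3qf_{12}^3/f_4$ after the shift $(r,s)\mapsto(r+1,s+1)$, so the conclusion ``the odd part of $f_3^3/f_1$ is $qf_{12}^3/f_4$'' survives; but the identity as you stated it is false and should be corrected. Second, and more seriously, identifying the even sub-sum (the terms with $mn$ even) as $f_4^3f_6^2/(f_2^2f_{12})$ is not a bookkeeping step that can be deferred: it is the entire content of the other half of \eqref{5}, and nothing in your outline produces it. Knowing the odd part only tells you the even part equals $f_3^3/f_1-qf_{12}^3/f_4$, which is circular. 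You would need to actually split the even sub-lattice (say into $m$ even and $n$ even by inclusion--exclusion) and recognize the resulting theta products, or else simply cite the identity from Hirschhorn as the paper does. As written, \eqref{5} is asserted rather than proved.
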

  The identity \eqref{1} is the 2-dissection of $\phi(q)$ \cite[(1.9.4)]{pq}. The identity \eqref{2} is the
  2-dissection of $\phi(q)^2$ \cite[(1.10.1)]{pq}. The equations \eqref{3} can be obtained
  from the equations \eqref{1} by replacing $q$ by $-q$ respectively. The equation \eqref{5} is obtained from \cite[(22.1.14)]{pq}

 \begin{lemma}\cite{md} The following 3-dissections hold:
 \begin{equation}\label{4}
   \dfrac{f_2}{f_1^2}=\dfrac{f_6^4f_9^6}{f_3^8f_{18}^3}+2q\dfrac{f_6^3f_9^3}{f_3^7}+4q^2\dfrac{f_6^2f_{18}^3}{f_3^6},
   \end{equation}
   \begin{equation}\label{8}
      \dfrac{f_1^2}{f_2}=\dfrac{f_9^2}{f_{18}}-2q\dfrac{f_3f_{18}^2}{f_6f_9}.
      \end{equation}
  \end{lemma}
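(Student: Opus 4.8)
The plan is to derive both dissections from the classical theta function $\phi(-q)=\sum_{n=-\infty}^{\infty}(-1)^n q^{n^2}=f_1^2/f_2$, obtaining \eqref{8} by a direct splitting of this series and then producing \eqref{4} as the reciprocal of \eqref{8}. Since $f_2/f_1^2=1/\phi(-q)$, the two identities are two faces of the same computation, and handling them in this order keeps the work to a minimum.

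First I would prove \eqref{8}. Because $n^2\equiv 0$ or $1\pmod 3$ according as $3\mid n$ or $3\nmid n$, the series $\sum(-1)^n q^{n^2}$ splits into exactly two residue classes modulo $3$, which is why \eqref{8} has only two terms. The contribution of $n=3j$ is $\sum_j(-1)^jq^{9j^2}=\phi(-q^9)=f_9^2/f_{18}$, while the terms with $n=3j\pm 1$ combine, after the reflection $j\mapsto -j$, into $-2q\sum_{j=-\infty}^{\infty}(-1)^jq^{9j^2+6j}$. Recognizing this last sum as the Ramanujan theta function $\sum_n a^{n(n+1)/2}b^{n(n-1)/2}$ with $a=-q^{15}$, $b=-q^3$, $ab=q^{18}$, and applying the Jacobi triple product in the form $\sum_n a^{n(n+1)/2}b^{n(n-1)/2}=(-a;ab)_\infty(-b;ab)_\infty(ab;ab)_\infty$, I expect to get $(q^3;q^{18})_\infty(q^{15};q^{18})_\infty(q^{18};q^{18})_\infty$. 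Simplifying this via $(q^3;q^6)_\infty=f_3/f_6$ and $(q^9;q^{18})_\infty=f_9/f_{18}$ should give $f_3f_{18}^2/(f_6f_9)$, which yields \eqref{8} at once.

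Next I would deduce \eqref{4}. Writing \eqref{8} as $\phi(-q)=A+C$ with $A=f_9^2/f_{18}$ (a series in $q^3$, residue $0$) and $C=-2q\,f_3f_{18}^2/(f_6f_9)$ (residue $1$), I would rationalize $1/\phi(-q)$ using the factorization $\frac{1}{A+C}=\frac{A^2-AC+C^2}{A^3+C^3}$. The numerator separates automatically into residues $0,1,2$, its three terms being $A^2$, $-AC=+2qA\,f_3f_{18}^2/(f_6f_9)$, and $C^2$, while the denominator $A^3+C^3$ is a function of $q^3$; hence the three quotients are exactly the three components of the $3$-dissection, and the signs in $A^2-AC+C^2$ produce the positive coefficients $1,2,4$ seen in \eqref{4}.

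The main obstacle is the closed-form evaluation of the denominator $A^3+C^3$. Here I would use the observation that, with $\omega=e^{2\pi i/3}$, replacing $q$ by $\omega q$ fixes $A$ and multiplies $C$ by $\omega$, so that $A^3+C^3=\prod_{j=0}^{2}\phi(-\omega^j q)$. Writing $\phi(-q)=(q;q)_\infty^2/(q^2;q^2)_\infty$ and invoking the elementary factorization $\prod_{j=0}^{2}(\omega^j x;\omega^j x)_\infty=(x^3;x^3)_\infty^4/(x^9;x^9)_\infty$ once with $x=q$ and once with $x=q^2$, I expect $A^3+C^3=f_3^8f_{18}/(f_6^4f_9^2)$. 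Substituting this value, I would finally check that $A^2/(A^3+C^3)$, $-AC/(A^3+C^3)$, and $C^2/(A^3+C^3)$ reduce to $f_6^4f_9^6/(f_3^8f_{18}^3)$, $2q\,f_6^3f_9^3/f_3^7$, and $4q^2 f_6^2 f_{18}^3/f_3^6$ respectively, which is precisely \eqref{4}.
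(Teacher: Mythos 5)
Your proof is correct, and it is worth noting that the paper itself offers no argument for this lemma: it is stated with a bare citation to Hirschhorn and Sellers, so there is no in-paper proof to compare against. Your route is the standard (and essentially the cited source's) derivation, and every step checks out. For \eqref{8}, splitting $\phi(-q)=\sum_{n}(-1)^nq^{n^2}=f_1^2/f_2$ over $n\equiv 0,\pm1\pmod 3$ gives $\phi(-q^9)=f_9^2/f_{18}$ plus $-2q\sum_j(-1)^jq^{9j^2+6j}$, and the Jacobi triple product with $a=-q^{15}$, $b=-q^3$ indeed yields $(q^3;q^{18})_\infty(q^{15};q^{18})_\infty(q^{18};q^{18})_\infty=f_3f_{18}^2/(f_6f_9)$ after the $(q^3;q^6)_\infty$ simplification. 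For \eqref{4}, the rationalization $1/(A+C)=(A^2-AC+C^2)/(A^3+C^3)$ is legitimate because $A$ carries exponents $\equiv0$ and $C$ exponents $\equiv1$ modulo $3$, so the three numerator terms land in distinct residue classes while the denominator is a series in $q^3$; the evaluation $A^3+C^3=\prod_{j=0}^{2}\phi(-\omega^jq)=f_3^8f_{18}/(f_6^4f_9^2)$ via $\prod_{j}(\omega^jx;\omega^jx)_\infty=(x^3;x^3)_\infty^4/(x^9;x^9)_\infty$ (applied at $x=q$ and $x=q^2$) is correct, and the three quotients $A^2$, $-AC$, $C^2$ over this denominator reduce exactly to $f_6^4f_9^6/(f_3^8f_{18}^3)$, $2qf_6^3f_9^3/f_3^7$, and $4q^2f_6^2f_{18}^3/f_3^6$. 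What your approach buys is a self-contained verification of a result the paper merely imports; the paper buys brevity at the cost of opacity.
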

 \begin{lemma}\cite[Lemma 2.3]{lw} We have
 \begin{equation}\label{7}
       f_1^3= P(q^3)-3qf_9^3,
       \end{equation}
       where 
       $$P(q)= \sum_{m=-\infty}^{\infty}(-1)^m(6m+1)q^{m(3m+1)/2}=f(-q)\varphi(q)\varphi(q^3)+4qf(-q)\psi(q^2)\psi(q^6).$$
 \end{lemma}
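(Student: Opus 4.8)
The lemma bundles two essentially independent assertions: the cubic $3$-dissection $f_1^3=P(q^3)-3qf_9^3$, and the product evaluation of $P(q)$. My plan is to treat them separately, taking as the starting point Jacobi's classical identity
\[
 f_1^3=\sum_{n=0}^{\infty}(-1)^n(2n+1)q^{n(n+1)/2}.
\]

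For the dissection, I would sort the terms on the right by the residue of $n$ modulo $3$. With $T_n:=n(n+1)/2$ one has $T_n\equiv 1\pmod 3$ precisely when $n\equiv 1\pmod 3$ and $T_n\equiv 0\pmod 3$ when $n\equiv 0,2\pmod 3$, so the series has no exponents $\equiv 2\pmod 3$ and splits cleanly into two pieces. In the piece $n\equiv 1\pmod 3$ I substitute $n=3k+1$, use $T_{3k+1}=9T_k+1$ together with $(-1)^{3k+1}\bigl(2(3k+1)+1\bigr)=-3(-1)^k(2k+1)$, and pull out $-3q$; the remaining sum $\sum_k(-1)^k(2k+1)q^{9T_k}$ is Jacobi's identity with $q\mapsto q^9$, i.e.\ $f_9^3$, giving exactly $-3qf_9^3$. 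In the complementary piece I substitute $n=3k$ and $n=3k-1$; every exponent there is divisible by $3$, and writing the exponents as $3\cdot\tfrac{m(3m+1)}2$ one sees that $n=3k$ supplies the terms $m=k\ge 0$ and $n=3k-1$ supplies the terms $m=-k<0$ of the bilateral sum $\sum_{m\in\mathbf Z}(-1)^m(6m+1)q^{3m(3m+1)/2}=P(q^3)$. Combining the two pieces yields $f_1^3=P(q^3)-3qf_9^3$.

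The product formula for $P(q)$ is the harder half and is where I expect the real work to lie. A naive parity split of $P(q)$ does \emph{not} separate the two claimed terms (each of $f_1\varphi(q)\varphi(q^3)$ and $4qf_1\psi(q^2)\psi(q^6)$ contributes to every residue class, as a direct low-order check confirms), so I would instead expand both theta products by the Jacobi triple product, multiply through by the pentagonal expansion $f_1=\sum_k(-1)^kq^{k(3k-1)/2}$, and show that the resulting double sums collapse, via a linear change of summation variables, onto the single weighted series $\sum_m(-1)^m(6m+1)q^{m(3m+1)/2}$. The crux—and the main obstacle—is producing the arithmetic weight $6m+1$ from the convolution of the pentagonal series with the squares ($\varphi$) and triangular-number ($\psi$) theta series, and checking that the two quadratic forms in the exponents recombine into $m(3m+1)/2$; this is a bookkeeping-heavy theta-function computation. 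Since the lemma is quoted from \cite{lw}, the economical alternative is to verify that both sides are theta series of the same weight and level and to invoke that reference for the identification, using the elementary $3$-dissection above only for the first equation.
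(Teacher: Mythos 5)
The paper never proves this lemma: it is imported verbatim from \cite[Lemma 2.3]{lw}, so there is no internal argument to compare against, and your proposal should be judged on its own. Your proof of the dissection $f_1^3=P(q^3)-3qf_9^3$ is complete and correct: starting from Jacobi's identity (which the paper records separately as \eqref{e3}), the residue analysis of $T_n=n(n+1)/2$ modulo $3$ (no exponents $\equiv 2$, exponents $\equiv 1$ exactly when $n\equiv 1$), the identity $T_{3k+1}=9T_k+1$, the sign computation $(-1)^{3k+1}(6k+3)=-3(-1)^k(2k+1)$, and the reindexing of the $n=3k$ and $n=3k-1$ terms onto $m=k\ge 0$ and $m=-k\le -1$ of the bilateral sum all check out. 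This already covers everything the paper actually uses from the lemma: in each application of \eqref{7} one only needs that $P(q^3)$ contributes to the residue class $q^{3n}$ while $-3qf_9^3$ carries the class $q^{3n+1}$, never the theta-product evaluation of $P(q)$. That second assertion you only sketch (triple-product expansions collapsing onto the weighted series) and then propose to cite; as written that half is not proved, though your observation that a parity split does not separate the two products is correct, and a low-order check ($P(q)=1+5q-7q^2+\cdots$ versus $(1+q-3q^2+\cdots)+(4q-4q^2+\cdots)$) confirms consistency. So relative to the paper, which offers only a citation, your treatment is strictly more self-contained; to make the lemma fully self-contained you would still need to carry out, or explicitly defer to \cite{lw}, the identification $P(q)=f(-q)\varphi(q)\varphi(q^3)+4qf(-q)\psi(q^2)\psi(q^6)$.
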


 \begin{lemma} 
 \cite[Theorem 2.2]{cu} For any prime $p \geq 5$, we have
 $$f_1=\sum_{\substack{k=-(p-1)/2 \\ k\neq (\pm p-1)/6}
 }^{k=(p-1)/2}(-1)^kq^{(3k^2+k)/2}f\left( -q^{(3p^2+(6k+1)p)/2},-q^{(3p^2-(6k+1)p)/2}\right)$$

 \begin{equation} \label{pp}
 +(-1)^{(\pm p-1)/6}q^{(p^2-1)/24}f_{p^2}.
 \end{equation} where

 \begin{equation*}
 \dfrac{\pm p-1}{6}
 =\left\{
 \begin{array}{ll}
 \dfrac{ (p-1)}{6},
 & if~ p\equiv 1 \pmod{6},\\
 \dfrac{ (-p-1)}{6},
 & if~ p\equiv -1 \pmod{6}.
 \end{array}
 \right.
 \end{equation*}Furthermore, if\\
 $$\frac{-(p-1)}{2}\leq k \leq\frac{(p-1)}{2} \quad and \quad k \neq \frac{(\pm p-1)}{2},$$ then
 $$ \frac{(3k^2+k)}{2} \not\equiv  \frac{(p^2-1)}{24} \pmod{p}.$$
 \end{lemma}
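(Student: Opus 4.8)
The plan is to obtain this $p$-dissection (the classical $p$-dissection of $(q;q)_\infty$) directly from Euler's pentagonal number theorem, which in the form matching the exponents in the statement reads
$$f_1=(q;q)_\infty=\sum_{k=-\infty}^{\infty}(-1)^k q^{(3k^2+k)/2}.$$
Since $p\ge 5$ is odd, the integers $\ell$ with $-(p-1)/2\le \ell\le (p-1)/2$ form a complete residue system modulo $p$. First I would write each $k\in\mathbf{Z}$ uniquely as $k=pm+\ell$ with $m\in\mathbf{Z}$ and $\ell$ in this range, and substitute. A short expansion gives
$$\frac{3k^2+k}{2}=\frac{3\ell^2+\ell}{2}+\frac{3p^2m^2+pm(6\ell+1)}{2},$$
while the sign splits as $(-1)^k=(-1)^{pm+\ell}=(-1)^m(-1)^\ell$ because $p$ is odd. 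Collecting terms by the residue $\ell$ turns $f_1$ into a finite sum over $\ell$ of products of $(-1)^\ell q^{(3\ell^2+\ell)/2}$ with an inner sum over $m$.

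The next step is to recognise the inner sum as a Ramanujan theta function. With $f(a,b)=\sum_{n=-\infty}^{\infty}a^{n(n+1)/2}b^{n(n-1)/2}$ one has the elementary identity $\sum_{m}x^{m^2}y^m=f(xy,x/y)$; taking $x=q^{3p^2/2}$ and $y=-q^{(6\ell+1)p/2}$ converts the inner sum into
$$f\left(-q^{(3p^2+(6\ell+1)p)/2},\,-q^{(3p^2-(6\ell+1)p)/2}\right).$$
For every residue $\ell$ other than the exceptional one this produces exactly the summands of the main sum in the statement.

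It remains to isolate the exceptional residue. In the range $-(p-1)/2\le\ell\le(p-1)/2$ there is a unique $\ell_0$ with $6\ell_0+1\equiv 0\pmod p$, namely $\ell_0=(\pm p-1)/6$ (so that $6\ell_0+1=\pm p$). For this $\ell_0$ the two theta arguments become $-q^{2p^2}$ and $-q^{p^2}$, and expanding $f\left(-q^{2p^2},-q^{p^2}\right)$ from the definition collapses it to $\sum_{n}(-1)^n q^{p^2(3n^2+n)/2}=f_{p^2}$; together with the prefactor, and using $24\cdot\frac{3\ell_0^2+\ell_0}{2}=(6\ell_0+1)^2-1=p^2-1$, this yields the isolated term $(-1)^{(\pm p-1)/6}q^{(p^2-1)/24}f_{p^2}$. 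The concluding non-collision assertion is then immediate: since $\gcd(24,p)=1$, one has $\frac{3k^2+k}{2}\equiv\frac{p^2-1}{24}\pmod p$ precisely when $(6k+1)^2\equiv 0\pmod p$, i.e. $6k+1\equiv 0\pmod p$, and in the stated range this forces $k=\ell_0$, so for all other admissible $k$ the congruence fails.

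The bulk of the argument is routine reindexing, so the main obstacle is the careful bookkeeping in the theta-function recognition — in particular verifying that the exceptional summand degenerates exactly to $f_{p^2}$ with the correct sign and $q$-power, where tracking the parity $(-1)^{n(n+1)/2+n(n-1)/2}=(-1)^{n^2}=(-1)^n$ is the one delicate point.
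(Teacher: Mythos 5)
Your argument is correct and complete: the reindexing $k=pm+\ell$, the identification of the inner sum as $f(xy,x/y)$ with $x=q^{3p^2/2}$, $y=-q^{(6\ell+1)p/2}$, the collapse of the exceptional term via $f(-q^{2p^2},-q^{p^2})=f_{p^2}$ together with $24\cdot\tfrac{3\ell_0^2+\ell_0}{2}=(6\ell_0+1)^2-1$, and the non-collision claim via $(6k+1)^2\equiv 0\pmod p$ are all verified. The paper itself gives no proof, simply citing Cui and Gu, and your derivation is precisely the standard one given in that source, so there is nothing further to compare.
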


  \begin{lemma} 
  From \cite{MD} We have that 
  \begin{equation} \label{g1}
  f_1=f_{25}(R(q^5)-q-q^2R(q^5)^{-1}),
  \end{equation} where $$R(q)=\dfrac{(q^2;q^5)_{\infty}(q^3;q^5)_{\infty}}{(q;q^5)_{\infty}(q^4;q^5)_{\infty}}.$$
  \end{lemma}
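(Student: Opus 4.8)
The plan is to derive this $5$-dissection of $f_1$ directly from Euler's pentagonal number theorem
$$f_1=(q;q)_\infty=\sum_{n=-\infty}^{\infty}(-1)^n q^{n(3n-1)/2},$$
by sorting the exponents according to their residues modulo $5$. First I would record the purely arithmetic fact that $n(3n-1)/2$ depends modulo $5$ only on $n \bmod 5$: since $2^{-1}\equiv 3\pmod 5$ one has $n(3n-1)/2\equiv 3n(3n-1)\pmod 5$, and tabulating over $n\equiv 0,1,2,3,4$ gives the residues $0,1,0,2,2$. Thus the exponents never fall in the classes $3$ or $4\pmod 5$, which explains why the right-hand side contains only the three pieces of residues $0$, $1$, $2$, namely $f_{25}R(q^5)$, $-qf_{25}$, and $-q^2 f_{25}R(q^5)^{-1}$. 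The strategy is then to isolate and identify each of these three residue classes separately.

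The residue-$1$ piece is the elementary one and I would treat it first. Collecting the terms with $n=5m+1$ and computing $\tfrac{(5m+1)(15m+2)}{2}=1+25\cdot\tfrac{m(3m+1)}{2}$ together with $(-1)^{5m+1}=-(-1)^m$, the sub-series factors as
$$-q\sum_{m=-\infty}^{\infty}(-1)^m q^{25\,m(3m+1)/2}=-q\,(q^{25};q^{25})_\infty=-qf_{25},$$
where the last equality is the pentagonal number theorem applied with $q$ replaced by $q^{25}$, after the reindexing $m\mapsto -m$ that sends $m(3m+1)/2$ to $m(3m-1)/2$. This matches the middle term of the claimed identity exactly.

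The crux of the argument is the identification of the residue-$0$ part (from $n\equiv 0,2\pmod 5$) with $f_{25}R(q^5)$ and of the residue-$2$ part (from $n\equiv 3,4\pmod 5$) with $-q^2 f_{25}R(q^5)^{-1}$. Here I would collect each residue class into its two contributing arithmetic progressions, write each progression as a Ramanujan theta function in $q^{25}$ via the Jacobi triple product, and then factor out the common $f_{25}$. After this extraction the surviving factors should assemble, in the residue-$0$ case, into the Rogers--Ramanujan product $\tfrac{(q^{10};q^{25})_\infty(q^{15};q^{25})_\infty}{(q^{5};q^{25})_\infty(q^{20};q^{25})_\infty}=R(q^5)$, and in the residue-$2$ case into its reciprocal; this is precisely where the definition of $R(q)$ enters. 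I expect this matching to be the main obstacle, since it requires carefully tracking the indices generated by the triple product and confirming that the surviving factors reconstitute exactly the product defining $R(q^5)$ and $R(q^5)^{-1}$. Combining the three pieces then yields $f_1=f_{25}\left(R(q^5)-q-q^2R(q^5)^{-1}\right)$. Should the direct product computation prove unwieldy, an alternative is to invoke Ramanujan's classical modular relation for the Rogers--Ramanujan continued fraction expressing the eta-quotient $f_1/f_{25}$ in terms of $R(q^5)$, and simply rearrange it into the stated form.
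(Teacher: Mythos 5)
The paper never proves this lemma: it is quoted verbatim from Hirschhorn \cite{MD}, so there is no internal argument to compare against. Your overall plan (dissect the pentagonal number series by residues of $n(3n-1)/2$ modulo $5$) is the classical route, your residue table $0,1,0,2,2$ for $n\equiv 0,1,2,3,4$ is correct, and your residue-$1$ computation is complete: the $n=5m+1$ terms do sum to $-q\,(q^{25};q^{25})_\infty$.

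The gap is in the mechanism you propose for the residue-$0$ and residue-$2$ classes, and it sits exactly where you predicted the "main obstacle" would be. Applying the Jacobi triple product to the two progressions $n=5m$ and $n=5m+2$ yields, respectively, $(q^{35};q^{75})_\infty(q^{40};q^{75})_\infty(q^{75};q^{75})_\infty$ and $q^{5}(q^{10};q^{75})_\infty(q^{65};q^{75})_\infty(q^{75};q^{75})_\infty$. Their only common factor is $f_{75}$, not $f_{25}$, so "factor out the common $f_{25}$ and let the surviving factors assemble into $R(q^5)$" cannot be executed as written; more importantly, the assertion that this \emph{sum} of two triple products collapses to the single product $f_{25}R(q^5)$ is not a factorization exercise at all but a genuine addition theorem --- it is an instance of the quintuple product identity (equivalently, of Ramanujan's relation for the Rogers--Ramanujan continued fraction). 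Without invoking one of those, the crux of the lemma remains unproved. Your fallback, on the other hand, is the standard proof and does work: writing $\mathcal{R}(q^5)=q/R(q^5)$ for the continued-fraction normalization, Ramanujan's identity $\mathcal{R}(q^5)^{-1}-1-\mathcal{R}(q^5)=f_1/(qf_{25})$ (see Berndt \cite{bc} or \cite{MD}) rearranges in one line to $f_1=f_{25}(R(q^5)-q-q^2R(q^5)^{-1})$. I would promote that from a contingency to the main argument, or else insert the quintuple product step explicitly into the dissection.
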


 \begin{lemma}\cite[p. 303, Entry 17(v)]{bc}  We have that
 \begin{equation}\label{u7}
 f_1=f_{49}\left(\dfrac{E(q^7)}{C(q^7)}-q \dfrac{D(q^7)}{E(q^7)}-q^2+q^5\dfrac{C(q^7)}{D(q^7)}\right),
 \end{equation}
 where $D(q)=f(-q^3,-q^4),  E(q)=f(-q^2,-q^5)~and ~C(q)=f(-q,-q^6)$.
 \end{lemma}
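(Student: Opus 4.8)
The plan is to derive this $7$-dissection of $f_1=(q;q)_\infty$ from Euler's pentagonal number theorem. First I would write $f_1=\sum_{n=-\infty}^{\infty}(-1)^n q^{n(3n-1)/2}$ and examine the pentagonal exponents $g_n:=n(3n-1)/2$ modulo $7$. Since $2^{-1}\equiv 4\pmod{7}$ one has $g_n\equiv 5n^2+3n\pmod{7}$, and letting $n$ run over a complete residue system shows that $g_n$ takes only the values $0,1,2,5\pmod{7}$, never $3,4,6$. This is precisely the structural reason the right-hand side involves only the powers $q^0,q^1,q^2,q^5$ (each times a function of $q^7$). Collecting the terms of the series by the residue of $g_n$ gives a splitting $f_1=A_0+A_1+A_2+A_5$, where $A_\rho$ is supported on exponents $\equiv\rho\pmod{7}$ and hence equals $q^\rho$ times a power series in $q^7$.

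Next I would pin down the common factor $f_{49}$ via the simplest block. The residue $2$ arises only from $n\equiv -1\pmod{7}$; writing $n=7m-1$ gives $g_{7m-1}=49\,g_m+2$ and $(-1)^{7m-1}=-(-1)^m$, so a second application of the pentagonal number theorem yields $A_2=-q^2\sum_m(-1)^m q^{49 g_m}=-q^2 f_{49}$. This both produces the term $-q^2$ and isolates $f_{49}$. For the three remaining blocks I would substitute $n=7m+a$ over the two residues feeding each block ($a\equiv 0,5$ for $A_0$; $a\equiv 1,4$ for $A_1$; $a\equiv 2,3$ for $A_5$), factor out the relevant power of $q$, and recognize each resulting pair of sums as a Ramanujan theta function $f(\alpha,\beta)=\sum_k \alpha^{k(k+1)/2}\beta^{k(k-1)/2}$. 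The statement then reduces to the three identities
\begin{equation*}
A_0\,C(q^7)=f_{49}\,E(q^7),\qquad A_1\,E(q^7)=-q\,f_{49}\,D(q^7),\qquad A_5\,D(q^7)=q^5 f_{49}\,C(q^7),
\end{equation*}
which, together with $A_2=-q^2f_{49}$, are equivalent to the asserted formula after dividing by $C(q^7)$, $E(q^7)$, $D(q^7)$ respectively.

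To handle these I would pass to infinite products. By the Jacobi triple product,
\begin{equation*}
C(q)=(q;q^7)_\infty(q^6;q^7)_\infty f_7,\quad D(q)=(q^3;q^7)_\infty(q^4;q^7)_\infty f_7,\quad E(q)=(q^2;q^7)_\infty(q^5;q^7)_\infty f_7,
\end{equation*}
whence $C(q)D(q)E(q)=f_7^{3}\prod_{j=1}^{6}(q^j;q^7)_\infty=f_1f_7^2$ and, on replacing $q$ by $q^7$, $C(q^7)D(q^7)E(q^7)=f_7f_{49}^2$, which provides a consistency check (the product of the three displayed identities). Using these product forms, each of the three reduced identities becomes an equality between a sum of two theta functions (the block $A_\rho$) multiplied by one of $C,D,E$ and a single theta product, and such relations can be verified from the addition formula for $f(\alpha,\beta)$.

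The main obstacle is exactly this last step: $A_0,A_1,A_5$ are each genuine sums of two theta functions, so the reduced identities are real two-variable (Schröter-type) theta relations rather than bookkeeping. I expect the cleanest way to close them is to expand both sides by the Jacobi triple product and match the resulting double sums under a change of summation index; alternatively, after clearing denominators both sides are holomorphic eta-quotients of the same weight and level, so agreement of coefficients up to the Sturm bound forces equality. Since the identity is quoted verbatim from Berndt's edition of Ramanujan's notebooks (Entry $17(\mathrm{v})$), it is legitimate simply to cite that source; the outline above records how I would reconstruct the proof.
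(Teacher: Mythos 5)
The paper does not actually prove this lemma: equation \eqref{u7} is quoted verbatim from Berndt's edition of Ramanujan's notebooks (Entry 17(v)) with no argument supplied, so your closing remark---that it is legitimate simply to cite that source---is precisely what the paper does, and in that sense your proposal matches it. Everything you do carry out explicitly is correct: the pentagonal exponents $g_n=n(3n-1)/2$ do reduce to $5n^2+3n$ modulo $7$ and take only the values $0,1,2,5$; the residue $2$ comes only from $n\equiv-1\pmod 7$; the computation $g_{7m-1}=49g_m+2$ together with the sign flip gives $A_2=-q^2f_{49}$ exactly; and the product evaluations $C(q)D(q)E(q)=f_1f_7^2$, hence $C(q^7)D(q^7)E(q^7)=f_7f_{49}^2$, are right.

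The genuine gap is the one you flag yourself: the three relations $A_0\,C(q^7)=f_{49}E(q^7)$, $A_1\,E(q^7)=-q\,f_{49}D(q^7)$ and $A_5\,D(q^7)=q^5f_{49}C(q^7)$ are not bookkeeping---each $A_\rho$ is an honest sum of two theta functions, and these three identities \emph{are} the content of Entry 17(v)---yet neither of your proposed finishing moves is executed. ``Matching the double sums under a change of index'' will not close them on its own; one needs a genuine dissection tool such as Ramanujan's general addition theorem for $f(a,b)$ (Entry 31 of Chapter 16 in the same volume) or the quintuple product identity. The Sturm-bound route is salvageable but not as stated: after clearing denominators the two sides are linear combinations of products of theta functions (weight-one forms on a congruence subgroup), not single eta-quotients, so you would need the version of that argument for sums of generalized eta-products and an explicit level before counting coefficients. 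So, read as a self-contained proof, the proposal stops exactly where the difficulty begins; read as ``reduce to the classical dissection and cite Berndt,'' it is correct and coincides with the paper's treatment.
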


 From \cite[p.39, Entry 24(iii)]{bc}, we note that
 \begin{equation}\label{e3}
 f_1^3=\sum_{n=0}^\infty(-1)^n(2n+1)q^{n(n+1)/2}.
 \end{equation}

 From \eqref{e3}, we deduce the following identities:
 \begin{equation}\label{eq11}
  \hspace{-6.4cm}(i)\quad f_1^3= J_0(q^{7}) -3qJ_1(q^{7})+ 5q^3J_2(q^{7})-7q^6J_3(q^{7}),
  \end{equation} 
  where $J_0,J_1,J_2$, and $J_3$ are series involving integral powers of $q^{7}$. 
  $$\hspace{-3.6cm}(ii)\quad f_1^3= I_0(q^{11}) -3qI_1(q^{11})+ 5q^3I_2(q^{11})-7q^6I_3(q^{11})+9q^{10}I_4(q^{11})$$\begin{equation}\label{eq2}-11q^{15}I_5(q^{11}). \end{equation}
    where $I_0,I_1,I_2,I_3,I_4$ and $I_5$
     are series involving  integral powers of $q^{11}$.  
 $$\hspace{-3.0cm}(iii)\quad f_1^3= C_0(q^{13}) -3qC_1(q^{13})+ 5q^3C_2(q^{13})-7q^6C_3(q^{13})+9q^{10}C_4(q^{13})$$\begin{equation}\label{eq21}-11q^{15}C_5(q^{13}) +13 q^{21}C_6(q^{13}).
 \end{equation}
 where $C_0,C_1,C_2,C_3,C_4,C_5$ and $C_6$ are series involving  integral powers of $q^{13}$.
 $$\hspace{-2.5cm}(iv)\quad 
 f_1^3= D_0(q^{17}) -3 qD_1(q^{17})+ 5q^3D_2(q^{17}) -7q^6D_3(q^{17})+9q^{10}D_4(q^{17})$$\begin{equation}\label{eq31}-11q^{15}D_5(q^{17}) +13 q^{21}D_6(q^{17})-15q^{28}D_7(q^{17}) + 17q^{36}D_8(q^{17}). 
 \end{equation}
 where $D_0, D_1,D_2,D_3,D_4,D_5,D_6,D_7$ and $D_8$ are series involving integral powers of $q^{17}$. 
 $$\hspace{-.2cm}(v)\quad 
 f_1^3= F_0(q^{19}) -3 qF_1(q^{19})+ 5q^3F_2(q^{19}) -7q^6F_3(q^{19})+9q^{10}F_4(q^{19})-11q^{15}F_5(q^{19})$$\begin{equation}\label{eq4} +13 q^{21}F_6(q^{19})-15q^{28}F_7(q^{19}) + 17q^{36}F_8(q^{19}) - 19q^{45}F_9 (q^{19}).
 \end{equation}
 where $F_0,F_1,F_2,F_3,F_4,F_5,F_6,F_7,F_8$ and $F_9$ are series with integral powers of $q^{19}$. 
  
 In addition to above $q-$series identities, we will be using following congruence properties which follows from binomial theorem: For any positive integer $k$ and $m$,
 
 	\begin{equation}\label{fp1}
 		f_k^{2m}\equiv f_{2k}^m\pmod 2,\end{equation}
 	\begin{equation}\label{fp2}
 		f_k^{4m}\equiv f_{2k}^{2m}\pmod 4,\end{equation}
 	\begin{equation}\label{fp3}
 		f_k^{8m}\equiv f_{2k}^{4m}\pmod 8,\end{equation}
 	\begin{equation}\label{fp4}
 		f_k^{16m}\equiv f_{2k}^{8m}\pmod {16}.\end{equation}

 \section{Congruences for $\overline{p}_{4,8}(n)$}
 \begin{theorem}If $k\in \left\lbrace 1,2,3,4,5,6\right\rbrace$ and  $j\in  \left\lbrace 0,2,3,4 \right\rbrace$ . Then for all integers $n\geq 0$ and  $\alpha\geq 0$, we have
 \begin{align}\label{t11}
 &\sum_{n=0}^\infty{\overline{p}_{4,8}\left(  5^{2\alpha} \cdot7^{2\alpha}\left(  16n+6\right)  \right) q^n}\equiv 32 f_1f_8\pmod{64},\\
 \label{t12}
 &\sum_{n=0}^\infty{\overline{p}_{4,8}\left(  5^{2\alpha+1}\cdot7^{2\alpha}\left(16n+14\right)  \right) q^n}\equiv 32 qf_{5}f_{40}\pmod{64},\\
 \label{t13}
 &\overline{p}_{4,8}\left(  5^{2\alpha+1}\cdot7^{2\alpha}\left(  16(5n+j)+14\right)  \right) \equiv 0\pmod{64}.\\
 \label{t14}
  &\sum_{n=0}^\infty{\overline{p}_{4,8}\left(  5^{2\alpha}\cdot7^{2\alpha+1}\left(16n+10\right)  \right) q^n}\equiv 32 q^2f_{7}f_{56}\pmod{64},\\
  \label{t15}
   &\overline{p}_{4,8}\left(  5^{2\alpha}\cdot7^{2\alpha+1}\left(16(7n+k)+14\right)  \right) \equiv 0\pmod{64},
 \end{align}
 \end{theorem}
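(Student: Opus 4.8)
The plan is to reduce the entire theorem to the single base congruence
\[
\sum_{n=0}^\infty \overline{p}_{4,8}(16n+6)\,q^n \equiv 32\,f_1 f_8 \pmod{64},
\]
and then to propagate it by two independent ``dissection loops,'' one modulo $5$ and one modulo $7$, each of which fixes the right-hand side while multiplying the argument by $5^2$, respectively $7^2$. First I would simplify \eqref{p10}: using the elementary product identities $(-q;q)_\infty=f_2/f_1$, $(q^4;q^8)_\infty=f_4/f_8$ and $(-q^4;q^8)_\infty=f_8^2/(f_4f_{16})$, the generating function collapses to $\sum_{n\ge0}\overline{p}_{4,8}(n)q^n=f_2f_4^2f_{16}/(f_1^2f_8^3)$.

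To obtain the base congruence I would carry out four successive $2$-dissections. Substituting \eqref{1} and taking the even part gives $\sum_n\overline{p}_{4,8}(2n)q^n=f_2^2f_4^2/(f_1^4f_8)$; applying \eqref{2} and keeping the odd part gives $\sum_n\overline{p}_{4,8}(4n+2)q^n=4f_2^4f_4^3/f_1^8$; squaring \eqref{2} to expand $1/f_1^8$, taking the odd part, and discarding the term carrying a factor $64$, leaves $\sum_n\overline{p}_{4,8}(8n+6)q^n\equiv 32\,f_2^{19}/f_1^{20}\pmod{64}$; finally the even part, reduced by \eqref{fp1}, collapses to $32\,f_2^9$, whence $\sum_n\overline{p}_{4,8}(16n+6)q^n\equiv 32\,f_1^9\equiv 32\,f_1 f_8\pmod{64}$, the last step using $f_1^9\equiv f_1 f_2^4\equiv f_1 f_8\pmod2$. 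This mod-$64$ bookkeeping across four dissections is the one genuinely laborious step and the main obstacle.

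The two loops are then \emph{exact} identities carried along with the factor $32$. For the $5$-loop I would feed $f_1$ and $f_8$ through \eqref{g1}: this shows $f_1$ is supported only on exponents $\equiv0,1,2\pmod5$ and $f_8$ only on $\equiv0,1,3\pmod5$, with the residue-$1$ part of $f_1$ equal to $-qf_{25}$ and the residue-$3$ part of $f_8$ equal to $-q^8f_{200}$. Hence the exponent-$\equiv4\pmod5$ part of $f_1f_8$ is exactly $q^9f_{25}f_{200}$, which under $q^5\mapsto q$ becomes $qf_5f_{40}$; since $16(5n+4)+6=5(16n+14)$ this is \eqref{t12}. Conversely $qf_5f_{40}$ is supported entirely on exponents $\equiv1\pmod5$, so extracting that class and applying $q^5\mapsto q$ returns $f_1f_8$ and sends $5(16n+14)$ to $5^2(16n+6)$. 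Thus one full $5$-loop fixes $32f_1f_8\pmod{64}$ and multiplies the argument by $5^2$. The $7$-loop is identical with \eqref{u7} in place of \eqref{g1}: $f_1$ and $f_8$ are each supported on residues $\{0,1,2,5\}\pmod7$, the only pair summing to $4$ is $(2,2)$, the residue-$2$ parts are $-q^2f_{49}$ and $-q^{16}f_{392}$, so the exponent-$\equiv4\pmod7$ part of $f_1f_8$ is $q^{18}f_{49}f_{392}\mapsto q^2f_7f_{56}$; since $16(7n+4)+6=7(16n+10)$ this gives \eqref{t14}, while extracting the residue-$2\pmod7$ class returns $32f_1f_8$ with the argument multiplied by $7^2$.

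With the two loops in hand the theorem follows by a simultaneous induction on $\alpha$: the hypothesis is $\sum_n\overline{p}_{4,8}(5^{2\alpha}7^{2\alpha}(16n+6))q^n\equiv32f_1f_8\pmod{64}$, and since each loop depends only on the right-hand side and not on the accumulated multiplier, applying one $5$-loop followed by one $7$-loop advances $\alpha$ to $\alpha+1$; this is \eqref{t11}. Stopping after the forward half of a $5$-loop (resp.\ $7$-loop) from the $\alpha$-state produces \eqref{t12} (resp.\ \eqref{t14}). Finally \eqref{t13} and \eqref{t15} are read off from the support of the right-hand sides: $32qf_5f_{40}$ occupies only the class $\equiv1\pmod5$ and $32q^2f_7f_{56}$ only the class $\equiv2\pmod7$, these being precisely the residues for which the inner argument becomes divisible by the relevant prime, so every coefficient in the complementary residue classes vanishes modulo $64$. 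I expect the loops to be essentially routine once the sparsity of $f_1$ under \eqref{g1} and \eqref{u7} is invoked, and the four-fold $2$-dissection of the base case to be where all the real work lies.
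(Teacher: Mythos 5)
Your proposal is correct and follows essentially the same route as the paper: the identical chain of $2$-dissections via \eqref{1} and \eqref{2} down to $\sum_n\overline{p}_{4,8}(16n+6)q^n\equiv 32f_1f_8\pmod{64}$, followed by induction using the $5$- and $7$-dissections \eqref{g1} and \eqref{u7} of $f_1$ and $f_8$. Your explicit residue-class bookkeeping (and the interleaving of the two loops to advance the single exponent $\alpha$) is in fact cleaner than the paper's write-up, which informally introduces a second parameter $\beta$, but it is the same argument.
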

 \begin{proof}
 Setting $j=4$ and $k=8$ in \eqref{p10}, we obtain
 \begin{equation}\label{l1}
 \sum_{n=0}^{\infty}\overline{p}_{4,8}(n)q^n=\dfrac{(-q;q)_{\infty}(q^4;q^8)_{\infty}}{(q;q)_{\infty}(-q^4;q^8)_{\infty}}.\end{equation}
 Applying elementary $q$-operation and using \eqref{fff}, we obtain 
 \begin{equation}\label{p11}
 \sum_{n=0}^{\infty}\overline{p}_{4,8}(n)q^n=\dfrac{f_2f_4^2f_{16}}{f_1^2f_8^3}.
 \end{equation}
 Using \eqref{1} in \eqref{p11}, we obtain 
 \begin{equation}\label{p12}
 \sum_{n=0}^{\infty}\overline{p}_{4,8}(n)q^n= \dfrac{f_4^2f_8^2}{f_2^4f_{16}}+2q\dfrac{f_4^4f_{16}^3}{f_2^4f_8^4} .
 \end{equation}
Extracting the terms involving $q^{2n}$ and $q^{2n+1}$ from \eqref{p12}, we obtain
 \begin{equation}\label{p13}
 \sum_{n=0}^{\infty}\overline{p}_{4,8}(2n)q^n= \dfrac{f_2^2f_4^2}{f_1^4f_{8}}
 \end{equation}
 and
 \begin{equation}\label{p14}
 \sum_{n=0}^{\infty}\overline{p}_{4,8}(2n+1)q^n= 2\dfrac{f_2^4f_{8}^3}{f_1^4f_4^4} ,
 \end{equation} respectively. 
 Employing \eqref{2} in \eqref{p13}, we obtain 
 \begin{equation}\label{p131}
 \sum_{n=0}^{\infty}\overline{p}_{4,8}(2n)q^n=  \dfrac{f_4^{16}}{f_2^{12}f_{8}^5}+4q\dfrac{f_4^4f_{8}^3}{f_2^{8}}.
 \end{equation}
 Extracting the terms involving  $q^{2n}$ and $q^{2n+1}$ from  \eqref{p131}, we obtain
 \begin{equation}\label{p141}
 \sum_{n=0}^{\infty}\overline{p}_{4,8}(4n)q^n=  \dfrac{f_2^{16}}{f_1^{12}f_{4}^5}
 \end{equation}
 and
 \begin{equation}\label{p15}
 \sum_{n=0}^{\infty}\overline{p}_{4,8}(4n+2)q^n=  4\dfrac{f_2^4f_{4}^3}{f_1^{8}},
 \end{equation}respectively.
 Employing \eqref{2} in \eqref{p15}, we obtain
 \begin{equation}\label{p16}
 \sum_{n=0}^{\infty}\overline{p}_{4,8}(4n+2)q^n=  4 \dfrac{f_4^{31}}{f_2^{24}f_{8}^8}+32q\dfrac{f_4^{19}}{f_2^{20}} +64q^2\dfrac{f_4^7f_{8}^8}{f_2^{16}}.
 \end{equation}
 Extracting the terms involving $q^{2n+1}$ from \eqref{p16}, we obtain
 \begin{equation}\label{p17}
 \sum_{n=0}^{\infty}\overline{p}_{4,8}(8n+6)q^n= 32q\dfrac{f_2^{19}}{f_1^{20}} .
 \end{equation}
 Employing \eqref{fp1} in \eqref{p17}, we find that 
 \begin{equation}\label{p18}
 \sum_{n=0}^{\infty}\overline{p}_{4,8}(8n+6)q^n= 32f_2^{9}\pmod{64} .
 \end{equation}
 Extracting the terms involving $q^{2n}$ from \eqref{p18}, we obtain 
 \begin{equation}\label{p19}
 \sum_{n=0}^{\infty}\overline{p}_{4,8}(16n+6)q^n= 32f_1^{8}f_1\pmod{64}.
 \end{equation}
 Again, using \eqref{fp1} in \eqref{p19}, we obtain
 \begin{equation}\label{p20}
 \sum_{n=0}^{\infty}\overline{p}_{4,8}(16n+6)q^n= 32f_{8}f_1\pmod{64} .
 \end{equation}

    The equation \eqref{p20} is the case $\alpha=\beta=0$ of equation \eqref{t11}. Suppose that the congruence \eqref{t11} is true for any integer $\alpha\ge0$ with $\beta=0$. Utilising \eqref{g1} in \eqref{t11} with $\beta=0$ and then extracting the terms involving $q^{5n+4}$, we arrive at
     		  	  	    \begin{equation}\label{v1}
     		  	  	 \sum_{n=0}^\infty{\overline{p}_{4,8}\left(  5^{2\alpha+1}\left(  16n+14\right)  \right) q^n}\equiv 32 qf_5f_{40}\pmod{64}.
     		  	  	   \end{equation}
     		  	  	   Extracting the terms involving $q^{5n+1}$, from \eqref{v1}, we obtain
     		  	  	   \begin{equation}\label{v2}
     		  	  	   \sum_{n=0}^\infty{\overline{p}_{4,8}\left(  5^{2(\alpha+1)}\left(  16n+6\right)  \right) q^n}\equiv 32 f_1f_{8}\pmod{64},
     		  	  	   \end{equation}
     which implies that \eqref{t11} is true for $\alpha+1$ with $\beta=0$. By principle of mathematical induction, \eqref{t11} is true for all non negative integers $\alpha\ge0$ with $\beta=0$.
     		  	  	   Suppose that the congruence \eqref{t11} holds for $\alpha,\beta\ge0$. Utilising \eqref{u7} in \eqref{t11} and then extracting the terms involving $q^{7n+4}$, we obtain
     		  	  	   \begin{equation}\label{v3}
     		  	  	   \sum_{n=0}^\infty{\overline{p}_{4,8}\left(  5^{2\alpha}\cdot7^{2\alpha+1}\left(16n+10\right)  \right) q^n}\equiv 32 q^2f_{7}f_{56}\pmod{64},
     		  	  	   \end{equation} which proves \eqref{t14}.
     		  	  	   Now extracting the terms involving $q^{7n+2}$, from \eqref{v3}, we obtain
     		  	  	   \begin{equation}\label{v4}
     		  	  	  \sum_{n=0}^\infty{\overline{p}_{4,8}\left(  5^{2\alpha}\cdot7^{2(\alpha+1)}\left(16n+6\right)  \right) q^n}\equiv 32 f_{1}f_{8}\pmod{64},
     		  	  	   \end{equation}
     		  	  	  which implies that \eqref{t11} is true for all $\beta+1$. By principle of mathematical induction \eqref{t11} is true for all non-negative integers $\alpha, \beta$.

     		  	  	 Employing \eqref{g1} in \eqref{t11} and then extracting the terms involving $q^{5n+4}$, we arrive at \eqref{t12}.
     		  	  	  Again employing \eqref{g1} in \eqref{t12} and extracting the terms involving $q^{5n+j}$ for $j\in \{0,2,3,4\}$ from \eqref{t12}, we arrive at \eqref{t13}. Employing \eqref{u7} in \eqref{v3} and then extracting the terms involving $q^{7n+k}$ for $k\in \{0,1,3,4,5,6\}$, we arrive at \eqref{t15}.   
       \end{proof}
 
 \begin{theorem}\label{th4}
 Let $t\in{\{6,10,14\}}$, $s\in \{2,4\}$, $i\in \{1,2\}$, $u_1\in \{18,34\}$, $j \in \{1,2,3,4\}$ and $k \in \{1,2,3,4,5,6\}$ . Then for all integers $\alpha \geq0$, $\beta \geq0$, and  $\gamma \geq0$, we have
 \begin{align}\label{mf1a}
 &\overline{p}_{4,8}(16n+t)\equiv 0\pmod{32},\\
 \label{r1a}
 &\sum_{n=0}^{\infty}\overline{p}_{4,8}\Big(16\cdot3^{4\alpha}\cdot5^{4\beta}\cdot7^{4\gamma}(n)+2\cdot3^{4\alpha}\cdot5^{4\beta}\cdot7^{4\gamma}\Big)q^n\equiv 4f_1^3 \pmod{32},\\ 
 \label{s1}
 &\overline{p}_{4,8}\Big(16\cdot3^{4\alpha+1}\cdot5^{4\beta}\cdot7^{4\gamma}(n)+34\cdot3^{4\alpha}\cdot5^{4\beta}\cdot7^{4\gamma}\Big)\equiv 0 \pmod{32},\\
 \label{s2}
 &\sum_{n=0}^{\infty}\overline{p}_{4,8}\Big(16\cdot3^{4\alpha+1}\cdot5^{4\beta}\cdot7^{4\gamma}(n)+2\cdot3^{4\alpha+2}\cdot5^{4\beta}\cdot7^{4\gamma}\Big)q^n\equiv -12f_3^3 \pmod{32},\\
 	\label{s21}
 	 &\sum_{n=0}^{\infty}\overline{p}_{4,8}\Big(16\cdot3^{4\alpha+2}\cdot5^{4\beta}\cdot7^{4\gamma}(n)+2\cdot3^{4\alpha+2}\cdot5^{4\beta}\cdot7^{4\gamma}\Big)q^n\equiv -12f_1^3 \pmod{32},\\
 	 \label{s22}
  	 &\overline{p}_{4,8}\Big(16\cdot3^{4\alpha+2}\cdot5^{4\beta}\cdot7^{4\gamma}(3n+i)+2\cdot3^{4\alpha+2}\cdot5^{4\beta}\cdot7^{4\gamma}\Big)\equiv 0 \pmod{32},\\
 	\label{s3}
 		&\overline{p}_{4,8}\Big(16\cdot3^{4\alpha+3}\cdot5^{4\beta}\cdot7^{4\gamma}(n)+34\cdot3^{4\alpha+2}\cdot5^{4\beta}\cdot7^{4\gamma}\Big)\equiv 0 \pmod{32},\\
 	\label{s4}
  	&\sum_{n=0}^{\infty}\overline{p}_{4,8}\Big(16\cdot3^{4\alpha+3}\cdot5^{4\beta}\cdot7^{4\gamma}(n)+2\cdot3^{4(\alpha+1)}\cdot5^{4\beta}\cdot7^{4\gamma}\Big)q^n\equiv 4f_3^3 \pmod{32},\\
  	\label{s5}
  	 &\overline{p}_{4,8}\Big(16\cdot3^{4\alpha+4}\cdot5^{4\beta}\cdot7^{4\gamma}(n)+u_1\cdot3^{4(\alpha+1)}\cdot5^{4\beta}\cdot7^{4\gamma}\Big)\equiv 0 \pmod{32},\\
  	 \label{s6}
 	 &\sum_{n=0}^{\infty}\overline{p}_{4,8}\Big(16\cdot3^{4\alpha}\cdot5^{4\beta+1}\cdot7^{4\gamma}(n)+2\cdot3^{4\alpha}\cdot5^{4\beta+2}\cdot7^{4\gamma}\Big)q^n\equiv 20f_5^3 \pmod{32},\\
 	 \label{s66}
 	  	& \overline{p}_{4,8}\Big(16\cdot3^{4\alpha}\cdot5^{4\beta}\cdot7^{4\gamma}( 5n+s)+2\cdot3^{4\alpha}\cdot5^{4\beta}\cdot7^{4\gamma}\Big)\equiv 0 \pmod{32},\\
 	  \label{s8}
 	  	 &\sum_{n=0}^{\infty}\overline{p}_{4,8}\Big(16\cdot3^{4\alpha}\cdot5^{4\beta+2}\cdot7^{4\gamma}(n)+2\cdot3^{4\alpha}\cdot5^{4\beta+2}\cdot7^{4\gamma}\Big)q^n\equiv 20f_1^3 \pmod{32},\\
 \label{s7}
 	 &\overline{p}_{4,8}\Big(16\cdot3^{4\alpha}\cdot5^{4\beta+1}\cdot7^{4\gamma}(5n+j)+2\cdot3^{4\alpha}\cdot5^{4\beta+2}\cdot7^{4\gamma}\Big)q^n\equiv 0\pmod{32},\\
 	\label{s9}
 	 &\sum_{n=0}^{\infty}\overline{p}_{4,8}\Big(16\cdot3^{4\alpha}\cdot5^{4\beta+3}\cdot7^{4\gamma}(n)+2\cdot3^{4\alpha}\cdot5^{4(\beta+1)}\cdot7^{4\gamma}\Big)q^n\equiv 4f_5^3 \pmod{32},\\
 \label{s88}
  	 &\overline{p}_{4,8}\Big(16\cdot3^{4\alpha}\cdot5^{4\beta+2}\cdot7^{4\gamma}(5n+s)+2\cdot3^{4\alpha}\cdot5^{4\beta+2}\cdot7^{4\gamma}\Big)\equiv 0 \pmod{32},\\
  		\label{u1}
 	 &\overline{p}_{4,8}\Big(16\cdot3^{4\alpha}\cdot5^{4\beta+3}\cdot7^{4\gamma}(5n+j)+2\cdot3^{4\alpha}\cdot5^{4(\beta+1)}\cdot7^{4\gamma}\Big)\equiv 0 \pmod{32},\\
 	 \label{u2}
 &\sum_{n=0}^{\infty}\overline{p}_{4,8}\Big(16\cdot3^{4\alpha}\cdot5^{4\beta}\cdot7^{4\gamma+1}(n)+2\cdot3^{4\alpha}\cdot5^{4\beta}\cdot7^{4\gamma+2}\Big)q^n\equiv -4f_7^3 \pmod{32},\end{align}
 \begin{align}
 \label{u4}
&\sum_{n=0}^{\infty}\overline{p}_{4,8}\Big(16\cdot3^{4\alpha}\cdot5^{4\beta}\cdot7^{4\gamma+2)}(n)+3\cdot3^{4\alpha}\cdot5^{4\beta}\cdot7^{4\gamma+2}\Big)q^n\equiv -4f_1^3 \pmod{32},
\\\label{u3}
 &\overline{p}_{4,8}\Big(16\cdot3^{4\alpha}\cdot5^{4\beta}\cdot7^{4\gamma+1}(7n+k)+2\cdot3^{4\alpha}\cdot5^{4\beta}\cdot7^{4\gamma+2}\Big)\equiv 0 \pmod{32},\\
 \label{u5}
 &\sum_{n=0}^{\infty}\overline{p}_{4,8}\Big(16\cdot3^{4\alpha}\cdot5^{4\beta}\cdot7^{4\gamma+3}(n)+2\cdot3^{4\alpha}\cdot5^{4\beta}\cdot7^{4(\gamma+1)}\Big)q^n\equiv 4f_7^3 \pmod{32},\\
 \label{u6}
 &\overline{p}_{4,8}\Big(16\cdot3^{4\alpha}\cdot5^{4\beta}\cdot7^{4\gamma+3}(7n+k)+2\cdot3^{4\alpha}\cdot5^{4\beta}\cdot7^{4(\gamma+1)}\Big)\equiv 0 \pmod{32}.
 \end{align} 
  \end{theorem}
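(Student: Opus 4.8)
The plan is to bootstrap the whole theorem from two facts already available in the $2$-dissection calculation of Section~3, and then to run three essentially identical inductions, one for each of the primes $3$, $5$, $7$, driven by the dissections of $f_1^3$.

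I would begin with the \emph{base cases}. Reducing \eqref{p16} modulo $32$ kills the last two terms, so $\sum_{n\ge0}\overline{p}_{4,8}(4n+2)q^n\equiv 4f_4^{31}/(f_2^{24}f_8^{8})\pmod{32}$, which is a power series in $q^2$. Passing to its even part (and writing $q$ for $q^2$) and then simplifying the resulting quotient modulo $8$ by \eqref{fp3} --- using $f_1^{8}\equiv f_2^4$, $f_2^{8}\equiv f_4^4$ and $f_4^{8}\equiv f_8^4\pmod 8$ --- I expect to reach $\sum_{n\ge0}\overline{p}_{4,8}(8n+2)q^n\equiv 4f_2^{19}/f_8^{4}\pmod{32}$, again a series in $q^2$. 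Its odd part is therefore $0$, which is exactly $\overline{p}_{4,8}(16n+10)\equiv0$; together with $\overline{p}_{4,8}(8n+6)\equiv0\pmod{32}$ from \eqref{p17} (covering $t=6,14$) this gives all of \eqref{mf1a}. Taking instead the even part and collapsing $f_1^{16}/f_4^{4}\equiv1\pmod 8$ yields the seed identity $\sum_{n\ge0}\overline{p}_{4,8}(16n+2)q^n\equiv 4f_1^3\pmod{32}$, i.e.\ \eqref{r1a} with $\alpha=\beta=\gamma=0$.

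Next comes the \emph{inductive engine}, set up once and run three times. Every right-hand side in \eqref{r1a}--\eqref{u6} is a fixed scalar times $f_1^3$ or $f_p^3$ with $p\in\{3,5,7\}$, and I would dissect $f_1^3$ via \eqref{7} for $p=3$, via \eqref{eq11} for $p=7$, and via the analogous $5$-dissection read off from \eqref{e3}. The exponents $n(n+1)/2$ in \eqref{e3} are triangular numbers, and modulo $p$ they miss several residue classes entirely while meeting one distinguished class $r_p$ with every coefficient divisible by $p$; explicitly the $q^{pn+r_p}$-part of $f_1^3$ equals $\pm p\,q^{r_p}f_{p^2}^3$, with $(r_3,r_5,r_7)=(1,3,6)$. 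Two kinds of consequence follow. From a congruence $cf_1^3$, extracting the missed classes ($2\bmod3$; $2,4\bmod5$; $2,4,5\bmod7$) gives the triangular-gap vanishing statements such as \eqref{s1} and \eqref{s66}, while extracting $q^{pn+r_p}$ and writing $q$ for $q^p$ produces $\pm pc\,f_p^3$. From a congruence $cf_p^3$, which is already a series in $q^p$, the class $q^{pn}$ returns $c\,f_1^3$ whereas every nonzero class gives $0$, yielding the whole-$q^p$ vanishing statements such as \eqref{s7} and \eqref{u3}. Carrying the additive constant through $16\,p^{e}n+2\,p^{e'}$ at each extraction reproduces the precise progressions displayed in the theorem.

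The \emph{period-$4$ pattern} and the modulus $32$ then lock together for a clean reason. Starting from $cf_1^3$, one extraction gives $\pm pc\,f_p^3$, a second gives $\pm pc\,f_1^3$, a third gives $p^2c\,f_p^3$, and a fourth gives $p^2c\,f_1^3$; so four extractions advance the exponent of $p$ by $4$ and multiply the scalar by $p^2$. Since $p$ is odd, $p^2\equiv1\pmod 8$, hence for the seed scalar $c=4$ one has $4p^2\equiv4\pmod{32}$ and the congruence recurs with identical scalar one level higher --- the inductive step carrying \eqref{r1a} from level $4\alpha$ to $4(\alpha+1)$ for $p=3$, and from $4\beta,4\gamma$ for $p=5,7$, with the three intermediate levels supplying \eqref{s1}--\eqref{s5}, the $5$-analogues \eqref{s6}--\eqref{u1}, and the $7$-analogues \eqref{u2}--\eqref{u6}. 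Because the three scales are coprime the dissections at $3$, $5$, $7$ act on disjoint variables and may be interleaved freely, so a triple induction yields the joint statement in $\alpha,\beta,\gamma$. The genuine difficulty will be clerical rather than conceptual: keeping the many progressions $16\cdot3^{a}5^{b}7^{c}n+2\cdot3^{a'}5^{b'}7^{c'}$ consistent as the exponents rotate with period $4$, and tracking the residues modulo $32$ of the scalars $4,\pm12,20,\pm4,\dots$ after each multiplication by $\pm p$ (the reductions $36\equiv4$, $100\equiv4$, $-28\equiv4\pmod{32}$, and the attendant signs, are exactly where slips occur). A smaller point is that only the $f_1$-dissection \eqref{g1} is recorded for $p=5$; I would isolate the required $q^{pn+r}$-components of $f_1^3$ once, straight from \eqref{e3}, so that all three inductions proceed on a single template.
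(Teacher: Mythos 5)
Your proposal follows essentially the same route as the paper: the seed congruence $\sum_{n\ge0}\overline{p}_{4,8}(16n+2)q^n\equiv 4f_1^3\pmod{32}$ and the vanishing of the classes $16n+6,10,14$ are obtained from the $2$-dissection of \eqref{p15} (the paper reduces \eqref{r1} in one step to $4f_4^3$ via \eqref{fp3} and extracts $q^{4n+j}$, which is a compressed version of your two successive even/odd splits of \eqref{p16}), and the three interleaved inductions are driven by exactly the dissection components you isolate from \eqref{e3} --- namely $-3qf_9^3$, $5q^3f_{25}^3$ and $-7q^6f_{49}^3$, which the paper obtains instead from \eqref{7} and from the cubes of \eqref{g1} and \eqref{u7} --- with the vanishing congruences coming from the missed triangular residue classes and from extracting nonzero classes of a $q^p$-series, and with the induction closing after four levels because $4p^2\equiv4\pmod{32}$, all exactly as in the paper. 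One remark: your (correct) reduction $4\cdot(-7)=-28\equiv 4\pmod{32}$ yields $+4f_7^3$ at the step corresponding to \eqref{u2}, whereas the paper records $-4f_7^3$ there; that discrepancy is a slip in the paper's bookkeeping, not a gap in your argument.
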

 
 \begin{proof} 
  From \eqref{p15}, we note that
 	  \begin{equation}\label{r1}
 	  \sum_{n=0}^{\infty} \overline{p}_{4,8}(4n+2)q^n\equiv\dfrac{4f_2^4f_4^3}{f_1^8}\pmod{32},
 	  \end{equation}
 	  Employing \eqref{fp3} in \eqref{r1}, we obtain 	  
 	  \begin{equation}\label{r2}
 	  \sum_{n=0}^{\infty} \overline{p}_{4,8}(4n+2)q^n\equiv4f_4^3\pmod{32}.
 	  \end{equation}
Extracting  the terms involving $q^{4n+1}$, $q^{4n+2}$ and $q^{4n+3}$  from \eqref{r2}, we arrive at \eqref{mf1a}. Again, extracting the terms involving $q^{4n}$ from \eqref{r2}, we obtain
 	  \begin{equation}\label{r3}
 	  \sum_{n=0}^{\infty}\overline{p}_{4,8}(16n+2)q^n\equiv4f_1^3\pmod{32}.
 	  \end{equation}
 	  The equation \eqref{r3} is the case $\alpha=\beta= \gamma=0$ of equation \eqref{r1a}. Suppose that the congruence \eqref{r1a} is true for any integer $\alpha\ge0$ with $\beta=\gamma=0$. Utilising \eqref{7} in \eqref{r1a} with $\beta=\gamma=0$ and then extracting the terms involving $q^{3n+1}$, we arrive at
  \begin{equation}\label{leq181}
  \sum_{n=0}^{\infty}\overline{p}_{4,8}\Big(16\cdot3^{4\alpha+1}(n)+2\cdot3^{4\alpha+2}\Big)q^n\equiv -12f_3^3\pmod{32}.
 \end{equation}
 Extracting the terms involving $q^{3n}$, from \eqref{leq181}, we obtain
 \begin{equation}\label{leq191}
 \sum_{n=0}^{\infty}\overline{p}_{4,8}\Big(16\cdot3^{4\alpha+2}(n)+2\cdot3^{4\alpha+2)}\Big)q^n\equiv -12 f_1^3\pmod{32}.
 \end{equation}
  Utilising \eqref{7} in \eqref{leq191} and then extracting the terms involving $q^{3n+1}$, we arrive at
  \begin{equation}\label{leq18}
  \sum_{n=0}^{\infty}\overline{p}_{4,8}\Big(16\cdot3^{4\alpha+3}(n)+2\cdot3^{4(\alpha+1)}\Big)q^n\equiv 4f_3^3\pmod{32}.
 \end{equation}
 Extracting the terms involving $q^{3n}$, from \eqref{leq18}, we obtain
 \begin{equation}\label{leq19}
 \sum_{n=0}^{\infty}\overline{p}_{4,8}\Big(16\cdot3^{4(\alpha+1)}(n)+2\cdot3^{4(\alpha+1)}\Big)q^n\equiv 4 f_1^3\pmod{32},
 \end{equation}
 which implies that \eqref{r1a} is true for $\alpha+1$ with $\beta=\gamma=0$. By principle of mathematical induction, \eqref{r1a} is true for all $\alpha$.
 Suppose that the congruence \eqref{r1a} holds for $\alpha,\beta\ge0$ with $\gamma=0$. Utilising \eqref{g1} in \eqref{r1a} and then extracting the terms involving $q^{5n+3}$, we obtain
 \begin{equation}\label{le201}
 \sum_{n=0}^{\infty}\overline{p}_{4,8}\Big(16\cdot3^{4\alpha}\cdot5^{4\beta +1}(n)+2\cdot3^{4\alpha}\cdot5^{4\beta+2}\Big)q^n\equiv 20f_5^3 \pmod{32}.
 \end{equation}
 Extracting the terms involving $q^{5n}$ from \eqref{le201}, we obtain
 \begin{equation}\label{le211}
 \sum_{n=0}^{\infty}\overline{p}_{4,8}\Big(16\cdot3^{4\alpha}\cdot5^{4\beta +2}(n)+2\cdot3^{4\alpha}\cdot 5^{4\beta+2}\Big)q^n\equiv 20f_1^3 \pmod{32}.
 \end{equation}
 Utilising \eqref{g1} in \eqref{le211} and then extracting the terms involving $q^{5n+3}$, we obtain
 \begin{equation}\label{le20}
 \sum_{n=0}^{\infty}\overline{p}_{4,8}\Big(16\cdot3^{4\alpha}\cdot5^{4\beta +3}(n)+2\cdot3^{4\alpha}\cdot5^{4(\beta+1)}\Big)q^n\equiv 4f_5^3 \pmod{32}.
 \end{equation}
 Extracting the terms involving $q^{5n}$, from \eqref{le20}, we obtain
 \begin{equation}\label{le21}
 \sum_{n=0}^{\infty}\overline{p}_{4,8}\Big(16\cdot3^{4\alpha}\cdot5^{4(\beta +1)}(n)+2\cdot3^{4\alpha}\cdot 5^{4(\beta+1)}\Big)q^n\equiv 4f_1^3 \pmod{32},
 \end{equation}
 which implies that \eqref{r1a} is true for $\beta+1$ with $\gamma=0$. By principle of mathematical induction, \eqref{r1a} is true for all non-negative integers $\alpha, \beta$ with $\gamma=0$. Suppose that the congruence \eqref{r1a} holds for $\alpha,\beta,\gamma\ge0$. Utilising \eqref{u7} in \eqref{r1a} and then extracting the terms involving $q^{7n+6}$, we obtain
 \begin{equation}\label{le221}
 \sum_{n=0}^{\infty}\overline{p}_{4,8}\Big(16\cdot3^{4\alpha}\cdot5^{4\beta}\cdot7^{4\gamma+1}(n)+2\cdot3^{4\alpha}\cdot5^{4\beta}\cdot7^{4\gamma+2}\Big)q^n\equiv -4f_7^3 \pmod{32},
 \end{equation} which proves \eqref{u2}. Extracting the terms involving $q^{7n}$ from \eqref{le221}, we obtain 
 \begin{equation}\label{le222}
 \sum_{n=0}^{\infty}\overline{p}_{4,8}\Big(16\cdot3^{4\alpha}\cdot5^{4\beta}\cdot7^{4\gamma+2}(n)+2\cdot3^{4\alpha}\cdot5^{4\beta}\cdot7^{4\gamma+2}\Big)q^n\equiv -4f_1^3 \pmod{32}, 
 \end{equation} 
 which proves \eqref{u4}. Utilising \eqref{u7} in \eqref{le222} and then extracting the terms involving $q^{7n+6}$, we obtain 
 \begin{equation}\label{le21k}
 \sum_{n=0}^{\infty}\overline{p}_{4,8}\Big(16\cdot3^{4\alpha}\cdot5^{4\beta}\cdot7^{4\gamma+3}(n)+2\cdot3^{4\alpha}\cdot5^{4\beta}\cdot7^{4(\gamma+1)}\Big)q^n\equiv 4f_7^3 \pmod{32},
 \end{equation} which proves \eqref{u5}. Extracting the terms involving $q^{7n}$ from \eqref{le22}, we obtain 
 \begin{equation}\label{le22}
 \sum_{n=0}^{\infty}\overline{p}_{4,8}\Big(16\cdot3^{4\alpha}\cdot5^{4\beta}\cdot7^{4(\gamma+1)}(n)+2\cdot3^{4\alpha}\cdot5^{4\beta}\cdot7^{4(\gamma+1)}\Big)q^n\equiv 4f_1^3 \pmod{32}, 
 \end{equation} 
 which implies that \eqref{r1a} is true for all $\gamma+1$. By mathematical induction \eqref{r1a} is true for all non-negative integers $\alpha, \beta, \gamma$. 
 
 Employing \eqref{7} in \eqref{r1a} and then extracting the terms involving $q^{3n+2}$, we arrive at \eqref{s1}.
 Again, employing \eqref{7} in \eqref{r1a} and extracting the terms involving $q^{3n+1}$ from both sides, we arrive at \eqref{s2}. Extracting the terms involving powers of $q^{3n}$ and  $q^{3n+i}$ for $i\in \{1,2\}$ from \eqref{s2}, we arrive at  \eqref{s21} and \eqref{s22}, respectively. 
 
 Employing \eqref{7} in \eqref{s21} and then extracting the terms involving $q^{3n+2}$ and $q^{3n+1}$ from both sides, we arrive at \eqref{s3} and \eqref{s4}, respectively.  
 Again, extracting the terms involving $q^{3n+i}$ for $i\in \{1,2\}$ from \eqref{s4}, we arrive at \eqref{s5}.   
 
Employing \eqref{g1} in \eqref{r1a}, then extracting the terms involving $q^{5n+3}$ and $q^{5n+s}$ for $s\in\{2,4\}$, we arrive at \eqref{s6} and \eqref{s66}, respectively. Extracting the terms involving $q^{5n}$ and $q^{5n+j}$ for $j\in\{1,2,3,4\}$ from \eqref{s6}, we arrive at \eqref{s7} and \eqref{s8}, respectively. 
 Again, employing \eqref{g1} in \eqref{s8} and then extracting the terms involving $q^{5n+3}$ and $q^{5n+s}$ for $s\in\{2,4\}$ from \eqref{s8}, we arrive at \eqref{s9} and \eqref{s88}, respectively. Extracting the terms involving $q^{5n+j}$ for $j=\{1,2,3,4\}$ from \eqref{s9}, we arrive at \eqref{u1}.  
 
  Employing \eqref{u7} in \eqref{r1a} and then extracting the terms involving $q^{7n+6}$, we arrive at \eqref{u2}. Extracting the terms involving $q^{7n+k}$ for $k\in\{1,2,3,4,5,6\}$ from \eqref{u2}, we arrive at \eqref{u3}. 
  Finally, extracting the terms involving $q^{7n+k}$ for $k\in\{1,2,3,4,5,6\}$ from \eqref{u5}, we complete the proof of \eqref{u6}. 
   \end{proof}

  \begin{theorem}For all integers $n\geq 0$ and $\alpha\geq 0$, we have  
   \begin{align}\label{t31}
       &\overline{p}_{4,8}(16(7n+j)+2) \equiv 0\pmod{32},\quad for \quad j\in 2,4,5,6,\\
      \label{t32}
       &\overline{p}_{4,8}(16(11n+k)+2) \equiv 0\pmod{32},\quad for \quad k\in 2,4,5,7,8,9,\\
    \label{t33}
        &\overline{p}_{4,8}(16(13n+m)+2) \equiv 0\pmod{32},\quad for \quad m\in 4,5,7,6,9,11,12,\end{align}
    \begin{align}
   \label{t34}
          &\overline{p}_{4,8}(16(17n+w)n+2) \equiv 0\pmod{32},\quad for \quad w\in 2, 5, 7, 8, 9, 12, 13, 14, 16,\\
       \label{t35}
             &\overline{p}_{4,8}(16(19n+t)n+2) \equiv 0\pmod{32},\quad for \quad s\in4, 5, 7,  8, 11, 12, 13, 14, 16, 18.
            \end{align}
  \end{theorem}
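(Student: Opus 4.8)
The plan is to derive all five families from the single congruence \eqref{r3} already in hand,
$$\sum_{n=0}^{\infty}\overline{p}_{4,8}(16n+2)q^n\equiv 4f_1^3\pmod{32},$$
together with the Jacobi-type expansion \eqref{e3}, namely $f_1^3=\sum_{m\ge 0}(-1)^m(2m+1)q^{m(m+1)/2}$. Writing $T_m=m(m+1)/2$, the coefficient of $q^N$ in $f_1^3$ is $\sum_{m:\,T_m=N}(-1)^m(2m+1)$, which is nonzero only when $N$ is a triangular number. Hence, for a fixed prime $p$ and residue $j$, if no triangular number lies in the class $j\pmod p$, then the $q^{pn+j}$-coefficient of $f_1^3$ vanishes identically and \eqref{r3} immediately gives $\overline{p}_{4,8}(16(pn+j)+2)\equiv 0\pmod{32}$ for every $n\ge 0$. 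Thus the whole theorem reduces to locating the triangular non-residues modulo $p$.

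To make this explicit I would use the elementary criterion that, after completing the square, $T_m\equiv j\pmod p$ is equivalent to $(2m+1)^2\equiv 8j+1\pmod p$. So $j$ is hit by a triangular number modulo $p$ precisely when $8j+1$ is a square modulo $p$ (the value $0$ included), and the forbidden classes are exactly those with $8j+1$ a quadratic non-residue. This is the structural content of the dissections \eqref{eq11}, \eqref{eq2}, \eqref{eq21}, \eqref{eq31} and \eqref{eq4}: their surviving exponents are $T_0,\dots,T_9=0,1,3,6,10,15,21,28,36,45$ reduced modulo $p$, which list exactly the triangular residues. For each $p\in\{7,11,13,17,19\}$ I would tabulate the squares modulo $p$, select the $j$ for which $8j+1$ is a non-residue, and read off the corresponding classes; substituting these into \eqref{r3} yields the asserted congruences modulo $32$.

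The delicate point, and the one I expect to be the genuine obstacle, is the single residue $j_0\equiv (p^2-1)/8\pmod p$, the unique class with $8j_0+1\equiv 0\pmod p$, where $(2m+1)^2\equiv 0$ has the double root $m\equiv (p-1)/2\pmod p$. Here $j_0$ \emph{is} represented, so the vanishing argument fails; instead every $m$ with $T_m\equiv j_0$ satisfies $2m+1\equiv 0\pmod p$, and the substitution $m=(p-1)/2+ps$ gives $T_m=(p^2-1)/8+p^2T_s$, whence the $q^{pn+j_0}$-part of $f_1^3$ collapses to $(-1)^{(p-1)/2}p\,q^{(p^2-1)/8}f_{p^2}^3$. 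These coefficients are divisible by $p$ but not by $8$, so \eqref{r3} yields only $\overline{p}_{4,8}(16(pn+j_0)+2)\equiv (-1)^{(p-1)/2}4p\,(\cdots)\pmod{32}$, and since $4p\not\equiv 0\pmod{32}$ for $p=7,11,13,17,19$, the class $j=j_0$ does not in fact satisfy the stated congruence. I would therefore prove the result for the true triangular non-residues only and exclude the values $j_0=6,4,8,2,7$; in particular the entry ``$6$'' attached to $p=13$ in \eqref{t33} is spurious (it is represented by $T_3$ and is not even the relevant $j_0=8$), so the correct list there is $\{4,5,7,9,11,12\}$.
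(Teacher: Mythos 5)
Your approach is the same as the paper's: both start from \eqref{r3} and feed in the dissection of $f_1^3$ coming from \eqref{e3}, so that the congruence holds for every residue class $j$ modulo $p$ that is not represented by a triangular number. Your criterion ($j$ is represented if and only if $8j+1$ is a square or zero modulo $p$, since $8T_m+1=(2m+1)^2$) is just a systematic way of reading off the surviving exponents $0,1,3,6,10,\dots$ in \eqref{eq11}--\eqref{eq4}.

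The substantive point is that your more careful execution is correct and the theorem as stated (and as ``proved'' in the paper) is not. The paper extracts the $q^{pn+j}$ terms from $4f_1^3$ and declares the result zero for every listed $j$, but for the class $j_0\equiv(p^2-1)/8\pmod p$ the relevant piece of $f_1^3$ is, exactly as you compute, $(-1)^{(p-1)/2}p\,q^{(p^2-1)/8}f_{p^2}^3$, whose coefficients are odd multiples of $p$; multiplied by $4$ these are $\equiv 4\pmod 8$ and hence never $0\pmod{32}$. Concretely $[q^6]f_1^3=-7$, so \eqref{r3} gives $\overline{p}_{4,8}(98)\equiv-28\equiv4\pmod{32}$, contradicting \eqref{t31} at $j=6$, $n=0$; similarly the classes $4$, $2$, $7$ must be deleted from the lists for $p=11$, $17$, $19$, and for $p=13$ the entry $6$ is false for the same numerical reason, since $98=16(13\cdot0+6)+2$ and $6=T_3$ is a genuine triangular residue. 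In short, the paper's proof silently treats the terms of type $-7q^6J_3(q^7)$ as vanishing modulo $32$ after multiplication by $4$, which they do not; your proposal proves the corrected statement with those residues removed, and your identification of the failing classes is accurate.
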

  \begin{proof}
  From \eqref{r3}, we have
   \begin{equation}\label{rp1}
   \sum_{n=0}^{\infty}\overline{p}_{4,8}(16n+2)q^n\equiv4f_1^3\pmod{32}.
   \end{equation}
   Employing \eqref{eq11} in \eqref{rp1}, we obtain
   \begin{equation}\label{rp2}
      \sum_{n=0}^{\infty}\overline{p}_{4,8}(16n+2)q^n\equiv4\left( J_0(q^{7}) -3qJ_1(q^{7})+ 5q^3J_2(q^{7})-7q^6J_3(q^{7})\right) \pmod{32}.
      \end{equation}
  Extracting the terms involving $q^{7n+j}$ from \eqref{rp2}, we obtain
    \begin{equation}\label{rp3}
      \overline{p}_{4,8}(16(7n+j)+2) \equiv 0\pmod{32},
      \end{equation}
      which proves \eqref{t31}.       
      Proofs of \eqref{t32} -\eqref{t35} are identitical to the proof of \eqref{t31} and  we employ  \eqref{eq2}-\eqref{eq4}, respectively. 
  \end{proof}

  	\begin{theorem} Let $j \in \{0,2,3,4\}$ and $k \in \{0,1,3,4,5,6\}$ . Then for all integers $\alpha \geq0$ and $\beta \geq0$, we have  	
  	\begin{equation}\label{y1}
  	   \sum_{n=0}^{\infty}\overline{p}_{4,8}\Big(8\cdot5^{2\alpha}\cdot7^{2\beta}(n)+3\cdot5^{2\alpha}\cdot7^{2\beta}\Big)q^n\equiv8f_1^9 \pmod{16}, 
  	   \end{equation}  	  
  	  \begin{equation}\label{y2}
  	  \sum_{n=0}^{\infty}\overline{p}_{4,8}\Big(8\cdot5^{2\alpha+1}\cdot7^{2\beta}(n)+7\cdot5^{2\alpha+1}\cdot7^{2\beta}\Big)q^n\equiv8qf_5^9 \pmod{16}, 
  	    \end{equation}    	
  	  \begin{equation}\label{y3}
  	      \overline{p}_{4,8}\Big(8\cdot5^{2\alpha+1}\cdot7^{2\beta}(5n+j)+7\cdot5^{2\alpha+1}\cdot7^{2\beta}\Big)\equiv0 \pmod{16}, 
  	     \end{equation}   	     
  	 \begin{equation}\label{y4}
  	       \sum_{n=0}^{\infty}\overline{p}_{4,8}\Big(8\cdot5^{2\alpha}\cdot7^{2\beta+1}(n)+5\cdot5^{2\alpha}\cdot7^{2\beta+1}\Big)q^n\equiv8q^2f_7^9 \pmod{16}, 
  	     \end{equation}     
  	    	  \begin{equation}\label{y5}
  	        \overline{p}_{4,8}\Big(8\cdot5^{2\alpha}\cdot7^{2\beta+1}(7n+k)+5\cdot5^{2\alpha}\cdot7^{2\beta+1}\Big)\equiv 0 \pmod{16}. 
  	      \end{equation}  	
  	\end{theorem}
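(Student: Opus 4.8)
The plan is to follow the same template already used for \eqref{t11}--\eqref{t15}, because after reduction modulo $16$ the right-hand sides here are governed by exactly the same theta quotient. First I would establish the base case of \eqref{y1}, namely $\sum_{n\ge0}\overline{p}_{4,8}(8n+3)q^n\equiv 8f_1^9\pmod{16}$, by dissecting \eqref{p14}. Substituting \eqref{2} into \eqref{p14} and separating even and odd powers of $q$ gives $\sum_{n\ge0}\overline{p}_{4,8}(4n+3)q^n=8\,f_4^7/(f_1^6f_2^2)$. Since this series carries the factor $8$, only its reduction modulo $2$ is relevant modulo $16$; applying \eqref{fp1} yields $f_4^7/(f_1^6f_2^2)\equiv f_8^3/f_2^3\pmod2$, a series in even powers of $q$ only. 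Extracting the even part and replacing $q^2$ by $q$ therefore gives $\sum_{n\ge0}\overline{p}_{4,8}(8n+3)q^n\equiv 8f_4^3/f_1^3\pmod{16}$, and a final application of \eqref{fp1} (in the form $f_1^{12}\equiv f_4^3\pmod2$) converts $8f_4^3/f_1^3$ into $8f_1^9$, which is \eqref{y1} at $\alpha=\beta=0$.

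The observation that drives both inductions is $f_1^8\equiv f_8\pmod2$, so that $8f_1^9\equiv 8f_1f_8\pmod{16}$; likewise $8qf_5^9\equiv 8qf_5f_{40}$ and $8q^2f_7^9\equiv 8q^2f_7f_{56}$. Thus modulo $16$ the right-hand sides of \eqref{y1}, \eqref{y2} and \eqref{y4} are scalar multiples of the very same quotients $f_1f_8$, $qf_5f_{40}$, $q^2f_7f_{56}$ that appeared in \eqref{t11}, \eqref{v1} and \eqref{v3}. Consequently the $5$- and $7$-dissections needed here are literally those carried out for the first theorem, with the scalar $32$ and modulus $64$ replaced by $8$ and $16$. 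The full statement then follows by a nested induction in the familiar order: base case, induction on $\alpha$ (powers of $5$) with $\beta$ fixed using \eqref{g1}, and finally induction on $\beta$ (powers of $7$) using \eqref{u7}.

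For the step on $\alpha$ I would substitute \eqref{g1} into $f_1$ and \eqref{g1} with $q\mapsto q^8$ into $f_8$, inside $8f_1f_8$. A residue count modulo $5$ shows that the progression $5n+4$ is fed only by pairing the $-qf_{25}$ part of $f_1$ with the $-q^8f_{200}$ part of $f_8$, so extracting $q^{5n+4}$ from the right side of \eqref{y1} produces $8qf_5f_{40}\equiv 8qf_5^9\pmod{16}$; with the index map $n\mapsto 5n+4$ and $35\cdot5^{2\alpha}7^{2\beta}=7\cdot5^{2\alpha+1}7^{2\beta}$ this is \eqref{y2}. Since $8qf_5^9$ is supported on $n\equiv1\pmod5$, extracting any $q^{5n+j}$ with $j\in\{0,2,3,4\}$ gives $0$ modulo $16$, which is \eqref{y3}; and extracting $q^{5n+1}$ (trivial, as everything lies in that class) returns $8f_1f_8\equiv 8f_1^9$ with the index advanced, using $15\cdot5^{2\alpha+1}=3\cdot5^{2\alpha+2}$, completing the induction for \eqref{y1}.

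The step on $\beta$ is entirely parallel, applying \eqref{u7} to $f_1$ and (via $q\mapsto q^8$) to $f_8$. A residue count modulo $7$ shows that the progression $7n+4$ is fed only by pairing the $-q^2f_{49}$ part of $f_1$ with the $-q^{16}f_{392}$ part of $f_8$, so extracting $q^{7n+4}$ from $8f_1f_8$ gives $8q^2f_7f_{56}\equiv 8q^2f_7^9\pmod{16}$, which with $35\cdot7^{2\beta}=5\cdot7^{2\beta+1}$ yields \eqref{y4}; the vanishing statement \eqref{y5} and the return to $8f_1^9$ at level $7^{2\beta+2}$ (using $21\cdot7^{2\beta+1}=3\cdot7^{2\beta+2}$) follow exactly as before. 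No single identity is hard here once the dissections of the first theorem are reused; the real work, and hence the main obstacle, is bookkeeping, namely verifying the several $\pmod2$ reductions that turn the ninth powers into the products $f_1f_8$, $f_5f_{40}$, $f_7f_{56}$, and keeping the arithmetic-progression indices aligned with the exponents of $5$ and $7$ at every stage.
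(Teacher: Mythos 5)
Your proposal is correct and follows essentially the same route as the paper: the base case comes from substituting \eqref{2} into \eqref{p14}, extracting the odd part to get $8f_4^7/(f_1^6f_2^2)$, and reducing modulo $2$ under the factor $8$ (the paper passes through $8f_2^9$ rather than $8f_8^3/f_2^3$, which is the same thing mod $2$), and the double induction then reuses the $5$- and $7$-dissections \eqref{g1} and \eqref{u7} exactly as in the proof of \eqref{t11}--\eqref{t15}, via $8f_1^9\equiv 8f_1f_8\pmod{16}$. Your residue counts modulo $5$ and $7$ and the index bookkeeping all match the paper's argument.
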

  	\begin{proof}
  		 From\eqref{p14}, we have
  		 \begin{equation}\label{q1}
  		 \sum_{n=0}^{\infty}\overline{p}_{4,8}(2n+1)q^n= 2\dfrac{f_2^4f_{8}^3}{f_1^4f_4^4}.
  		 \end{equation}
  		 Using \eqref{2} in \eqref{q1}, we obtain
  		 \begin{equation}\label{q3}
  		 \sum_{n=0}^{\infty}\overline{p}_{4,8}(2n+1)q^n= 2 \dfrac{f_4^{10}}{f_2^{10}f_{8}}+8q\dfrac{f_{8}^7}{f_2^{6}f_4^2}.
  		 \end{equation}
  		Now extracting the terms involving the powers of $q^{2n+1}$ from both sides of \eqref{q3}, we obtain
  		 \begin{equation}\label{q5}
  		 \sum_{n=0}^{\infty}\overline{p}_{4,8}(4n+3)q^n= 8\dfrac{f_{4}^7}{f_1^{6}f_2^2}.
  		 \end{equation}
  		 Employing \eqref{fp1} in \eqref{q5}, we obtain  		 
  \begin{equation}\label{q7}
   \sum_{n=0}^{\infty}\overline{p}_{4,8}(4n+3)q^n\equiv 8{f_{2}^9\pmod{16}}. 
  \end{equation}
   Extracting the terms involving $q^{2n}$ from \eqref{q7}, we obtain
  		 \begin{equation}\label{q13}
  		 \sum_{n=0}^{\infty}\overline{p}_{4,8}(8n+3)q^n\equiv 8{f_{1}^9\pmod{16}}.
  		 \end{equation}
  		 The equation \eqref{q13} is the case $\alpha=\beta=0$ of equation \eqref{y1}. Suppose that the congruence \eqref{y1} is true for any integer $\alpha\ge0$ with $\beta=0$. Utilising \eqref{g1} in \eqref{y1} with $\beta=0$ and then extracting the terms involving $q^{5n+4}$, we arrive at
  		  	  	    \begin{equation}\label{c1}
  		  	  	    \sum_{n=0}^{\infty}\overline{p}_{4,8}\Big(8\cdot5^{2\alpha+1}(n)+7\cdot5^{2\alpha+1}\Big)q^n\equiv8qf_5^9 \pmod{16}.
  		  	  	   \end{equation}
  		  	  	   Extracting the terms involving $q^{5n+1}$, from \eqref{c1}, we obtain
  		  	  	   \begin{equation}\label{c2}
  		  	  	   \sum_{n=0}^{\infty}\overline{p}_{4,8}\Big(8\cdot5^{2(\alpha+1)}(n)+3\cdot5^{2(\alpha+1)}\Big)q^n\equiv8f_1^9 \pmod{16},
  		  	  	   \end{equation}
  which implies that \eqref{y1} is true for $\alpha+1$ with $\beta=0$. By principle of mathematical induction, \eqref{y1} is true for all non negative integers $\alpha\ge0$ with $\beta=0$.
  		  	  	   Suppose that the congruence \eqref{y1} holds for $\alpha,\beta\ge0$. Utilising \eqref{u7} in \eqref{y1} and then extracting the terms involving $q^{7n+4}$, we obtain
  		  	  	   \begin{equation}\label{c3}
  		  	  	    \sum_{n=0}^{\infty}\overline{p}_{4,8}\Big(8\cdot5^{2\alpha}\cdot7^{2\beta+1}(n)+5\cdot5^{2\alpha}\cdot7^{2\beta+1}\Big)q^n\equiv8q^2f_7^9 \pmod{16},
  		  	  	   \end{equation} which proves \eqref{y4}.
  		  	  	   Now extracting the terms involving $q^{7n+2}$, from \eqref{c3}, we obtain
  		  	  	   \begin{equation}\label{c4}
  		  	  	  \sum_{n=0}^{\infty}\overline{p}_{4,8}\Big(8\cdot5^{2\alpha}\cdot7^{2(\beta+1)}(n)+3\cdot5^{2\alpha}\cdot7^{2(\beta+1)}\Big)q^n\equiv8f_1^9 \pmod{16},
  		  	  	   \end{equation}
  		  	  	  which implies that \eqref{y1} is true for all $\beta+1$. By principle of mathematical induction \eqref{y1} is true for all non-negative integers $\alpha, \beta$.

  		  	  	 Employing \eqref{g1} in \eqref{y1} and then extracting the terms involving $q^{5n+4}$, we arrive at \eqref{y2}.
  		  	  	  Again employing \eqref{g1} in \eqref{y2} and extracting the terms involving $q^{5n+j}$ for $j\in \{0,2,3,4\}$ from \eqref{y2}, we arrive at \eqref{y3}. Employing \eqref{u7} in \eqref{c3} and then extracting the terms involving $q^{7n+k}$ for $k\in \{0,1,3,4,5,6\}$, we arrive at \eqref{y5}.
  	\end{proof}
  	
   \begin{theorem}If $1\le j \le (p-1)$, then for all integers $n\geq 0$, $\alpha\geq 0$ and , we have
  
            \begin{equation}\label{t71}
            \sum_{n=0}^{\infty}\overline{p}_{4,8}\left( 2 \cdot p^{2\alpha}(8n+1)\right)  q^n \equiv f_1f_{2} \pmod{8},
            \end{equation}
            \begin{equation}\label{t72}
                \overline{p}_{4,8}\left( 2 \cdot p^{2(\alpha+1)}(8(pn+j)+1)\right) \equiv 0 \pmod{8}.
                \end{equation}
             \end{theorem}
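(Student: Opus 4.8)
The plan is to reduce everything modulo $8$ to a single theta-like series and then transport it through the prime $p$ by means of the dissection \eqref{pp}. First I would settle the base case $\alpha=0$ of \eqref{t71}. Starting from \eqref{p15}, $\sum_{n\ge0}\overline{p}_{4,8}(4n+2)q^n=4f_2^4f_4^3/f_1^8$, and since \eqref{fp3} gives $f_1^8\equiv f_2^4\pmod 8$, the quotient $f_2^4/f_1^8$ is $\equiv1\pmod 2$; multiplying by the explicit factor $4$ yields $\sum_{n\ge0}\overline{p}_{4,8}(4n+2)q^n\equiv 4f_4^3\pmod 8$. Because $f_4^3$ involves only powers of $q^4$, extracting the terms $q^{4n}$ and replacing $q^4$ by $q$ gives $\sum_{n\ge0}\overline{p}_{4,8}(16n+2)q^n\equiv 4f_1^3\pmod 8$. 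Finally $f_1^3=f_1\cdot f_1^2\equiv f_1f_2\pmod 2$ by \eqref{fp1}, so $4f_1^3\equiv 4f_1f_2\pmod 8$, which is \eqref{t71} at $\alpha=0$ (the leading constant being forced by $\overline{p}_{4,8}(2)=4$).

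The engine of the induction is the identity \eqref{e3}, $f_1^3=\sum_{m\ge0}(-1)^m(2m+1)q^{m(m+1)/2}$. As $(-1)^m(2m+1)$ is odd, reduction modulo $8$ collapses this to $4f_1^3\equiv 4\sum_{m\ge0}q^{T_m}\pmod 8$, where $T_m=m(m+1)/2$, and I would record the key arithmetic fact $8T_m+1=(2m+1)^2$. Writing $G_\alpha(q):=\sum_{n\ge0}\overline{p}_{4,8}\bigl(2p^{2\alpha}(8n+1)\bigr)q^n$, the inductive hypothesis reads $G_\alpha\equiv 4\sum_m q^{T_m}\pmod 8$; that is, the coefficient of $q^N$ is $\equiv4\pmod 8$ exactly when $8N+1$ is an odd square, and $\equiv0$ otherwise.

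For the step $\alpha\to\alpha+1$ I would simply compare coefficients: the coefficient of $q^{n'}$ in $G_{\alpha+1}$ equals that of $q^{p^2n'+(p^2-1)/8}$ in $G_\alpha$, because $8\bigl(p^2n'+(p^2-1)/8\bigr)+1=p^2(8n'+1)$, so the argument becomes $2p^{2\alpha}\cdot p^2(8n'+1)=2p^{2(\alpha+1)}(8n'+1)$. By the inductive hypothesis this coefficient is $\equiv4\pmod 8$ precisely when $p^2(8n'+1)$ is an odd square, and since $p$ is odd this occurs iff $8n'+1$ is an odd square, i.e.\ iff $n'$ is triangular. Hence $G_{\alpha+1}\equiv 4\sum_{m'}q^{T_{m'}}\equiv 4f_1^3\equiv 4f_1f_2\pmod 8$, giving \eqref{t71} for $\alpha+1$. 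This comparison is exactly the passage to the single self-similar residue class that \eqref{pp} isolates through its central term $q^{(p^2-1)/24}f_{p^2}$; the cubic analogue of the non-collision inequality $\tfrac{3k^2+k}{2}\not\equiv\tfrac{p^2-1}{24}\pmod p$ is the statement that $8T_m+1=(2m+1)^2$ must be a square modulo $p$, which guarantees that only the class with $p\mid(2m+1)$ survives the extraction.

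The congruence \eqref{t72} then falls out of the same description of $G_{\alpha+1}$: its coefficient of $q^{pn+j}$ is $\overline{p}_{4,8}\bigl(2p^{2(\alpha+1)}(8(pn+j)+1)\bigr)$, which is $\equiv4\pmod 8$ only if $pn+j$ is triangular, and that forces $8(pn+j)+1\equiv 8j+1\pmod p$ to be a square modulo $p$. For the residues $j$ with $8j+1$ a quadratic non-residue modulo $p$ this is impossible, so the coefficient vanishes modulo $8$. The main obstacle I anticipate is the bookkeeping at this last step: a term-by-term $p$-dissection of the product $f_1f_2$ would generate on the order of $p^2$ cross terms and obscure which class is self-similar, so passing to $f_1^3$ via \eqref{e3} and using $8T_m+1=(2m+1)^2$ is what keeps the induction transparent, reducing the whole matter to the elementary fact that an odd square is divisible by $p^2$ exactly when its root is divisible by $p$. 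The genuinely delicate point is to pin down precisely which residues $j$ are forced to vanish, namely those with $8j+1$ a non-residue modulo $p$, and to reconcile that set with the range of $j$ stated in \eqref{t72}.
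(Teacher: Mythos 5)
Your proof of \eqref{t71} is correct and takes a genuinely different route from the paper. The paper dissects the product $f_1f_2$ with two copies of the Cui--Gu identity \eqref{pp}, and to isolate the single surviving term $q^{(p^2-1)/8}f_{p^2}f_{2p^2}$ it must assume $\left(\tfrac{-2}{p}\right)=-1$ so that $(6k+1)^2+2(6m+1)^2\equiv 0\pmod p$ has only the trivial solution; that hypothesis is invoked in the proof but never stated in the theorem. Your replacement of $4f_1f_2$ by $4f_1^3\equiv 4\sum_{m\ge 0}q^{m(m+1)/2}\pmod 8$ via \eqref{e3}, together with the observations that $8\cdot\tfrac{m(m+1)}{2}+1=(2m+1)^2$ and that $p^2x$ is a square iff $x$ is, turns the induction into the tautology that an odd square times $p^2$ is again an odd square, valid for every odd prime. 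So your argument is both simpler and strictly more general. (Both proofs also show that the right-hand side of \eqref{t71} should read $4f_1f_2$ rather than $f_1f_2$; the factor $4$ is already forced by $\overline{p}_{4,8}(2)=4$.)

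For \eqref{t72} your analysis is the honest one: your coefficient description shows the stated congruence holds exactly for those $j$ with $8j+1$ a quadratic non-residue modulo $p$, and fails otherwise, and indeed \eqref{t72} as printed is false. With $p=5$, $\alpha=0$, $n=0$, $j=1$ it asserts $\overline{p}_{4,8}(450)\equiv 0\pmod 8$, yet $450=16\cdot 28+2$ with $28=7\cdot 8/2$ triangular, so \eqref{r3} gives $\overline{p}_{4,8}(450)\equiv 4\cdot(-15)\equiv 4\pmod 8$. What the paper's argument actually establishes is the congruence read off from $q^{pn+j}$ in \eqref{s7f}, namely $\overline{p}_{4,8}\bigl(2p^{2\alpha+1}(8(pn+j)+p)\bigr)\equiv 0\pmod 8$, which does hold for all $1\le j\le p-1$; in your language this is immediate because the corresponding $8N+1=p\bigl(p(8n+1)+8j\bigr)$ has $p$-adic valuation exactly $1$ and so is never a square. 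You should prove that corrected statement rather than chase the residues with $8j+1$ a non-residue; with that substitution your argument closes completely and still needs no condition on $p$.
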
             
       \begin{proof}
    From \eqref{p15}, we have
     \begin{equation}\label{411}
     \sum_{n=0}^{\infty}\overline{p}_{4,8}(4n+2)q^n=  4\dfrac{f_2^4f_{4}^3}{f_1^{8}},
     \end{equation}
     Using \eqref{fp3} in \eqref{312}, we obtain
     \begin{equation}\label{412}
     \sum_{n=0}^{\infty}\overline{p}_{4,8}(4n+2)q^n\equiv   4f_4^{3}\pmod{8}.
     \end{equation}
     Extracting the terms involving $q^{4n}$ from \eqref{412}, we obtain
     \begin{equation}\label{414}
     \sum_{n=0}^{\infty}\overline{p}_{4,8}(16n+2)q^n\equiv   4f_1f_2\pmod{8}.
     \end{equation} Congruence \eqref{414} is the $\alpha = 0$ case of \eqref{t71}. Suppose that congruence \eqref{t71} is true for all $\alpha\geq 0$. Utilising \eqref{pp} in \eqref{t71}, we obtain 
    \begin{equation*}
    \sum_{n=0}^{\infty}\overline{p}_{4,8}\left( 2 \cdot p^{2\alpha}(8n+1)\right)  q^n \equiv \Big\{ \sum_{\substack{k=-(p-1)/2 \\ k\neq (\pm p-1)/6}
    }^{k=(p-1)/2}(-1)^kq^{(3k^2+k)/2}f\left( -q^{(3p^2+(6k+1)p)/2},-q^{(3p^2-(6k+1)p)/2}\right)
    \end{equation*}
    \begin{equation*} 
    +(-1)^{(\pm p-1)/6}q^{(p^2-1)/24}f_{p^2} \Big\} \quad 
    \end{equation*}
    $$\times \Big\{ \sum_{\substack{m=-(p-1)/2 \\ m\neq (\pm p-1)/6}
    }^{m=(p-1)/2}(-1)^mq^{(3m^2+m)}f\left( -q^{(3p^2+(6m+1)p)},-q^{(3p^2-(6m+1)p)}\right) $$
    \begin{equation}\label{f1f51}
    +(-1)^{(\pm p-1)/6}q^{(p^2-1)/12}f_{2p^2} \Big\}\pmod{8}.
    \end{equation}    
    Consider the congruence 
    $$\frac{(3k^2+k)}{2}+(3m^2+m) \equiv \frac{(p^2-1)}{8}\pmod{p},$$
    which is equal to
    $$(6k+1)^2+2(6m+1)^2 \equiv 0 \pmod{p}.$$
    For $\left(\dfrac{-2}{p}\right)  = -1$, the above congruence has only solution $k = m = \left( \dfrac{\pm p-1}{6}\right) $. Therefore,
    extracting the terms involving $q^{{pn}+(p^2-1)/8}$ from \eqref{f1f51}, dividing throughout
    by $q^{(p^2-1)/8}$ and then replacing $q^p$ by $q$, we obtain    
    \begin{equation}\label{s7f}
    \sum_{n=0}^{\infty}\overline{p}_{4,8}\left( 2 \cdot p^{2\alpha+1}(8n+p)\right)  q^n \equiv f_pf_{2p} \pmod{8}. 
    \end{equation}
    Extracting the terms involving $q^{pn}$ from \eqref{s7f} and replacing $q^p$ by $q$, we obtain
    \begin{equation}\label{s8f}
     \sum_{n=0}^{\infty}\overline{p}_{4,8}\left( 2 \cdot p^{2(\alpha+1)}(8n+1)\right)  q^n \equiv f_1f_{2} \pmod{8}, \end{equation}
    which is the $\alpha$ + 1 case of \eqref{t71}. Thus, by the principle of mathematical induction, we
    arrive at \eqref{t71}.
   Comparing the terms involving $q^{pn+j}$ for $ 1\leq j \leq p - 1$, from of \eqref{s7f}, we complete the proof of \eqref{t72}.
    \end{proof}
   
  \begin{theorem}For all integers $n\geq 0$, $\alpha\geq 0$  and $k \in \{1,2\}$, we have 
  \begin{equation}\label{t41}
  \sum_{n=0}^\infty{\overline{p}_{4,8}\left(   8\cdot3^{2\alpha}n  \right) q^n}\equiv \dfrac{f_1^2}{f_2}\pmod{4},
  \end{equation}
  \begin{equation}\label{t42}
      \sum_{n=0}^\infty{\overline{p}_{4,8}\left(   8\cdot3^{2\alpha+1}n \right) q^n}\equiv \dfrac{f_3^2}{f_6}\pmod{4},
      \end{equation}
  \begin{equation}\label{t43}
    {\overline{p}_{4,8}\left(   8\cdot3^{2\alpha+1}n+16\cdot3^{2\alpha} \right)}\equiv 0\pmod{4},
    \end{equation}
  \begin{equation}\label{t44}
 {\overline{p}_{4,8}\left(   8\cdot3^{2\alpha}(3n+k)  \right) }\equiv 0\pmod{4}.
  \end{equation}
   \end{theorem}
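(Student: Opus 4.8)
The plan is to push the known recursion for $\overline{p}_{4,8}$ down to the progression $8n$, establish a theta-series base case modulo $4$, and then run an induction on $\alpha$ driven by the $3$-dissection \eqref{8}. Starting from \eqref{p141}, which gives $\sum_{n\ge0}\overline{p}_{4,8}(4n)q^n=f_2^{16}/(f_1^{12}f_4^5)$, I would reduce modulo $4$ using \eqref{fp2}: since $f_1^{4}\equiv f_2^{2}\pmod{4}$ we get $f_1^{12}\equiv f_2^{6}\pmod{4}$, so the right-hand side collapses to $f_2^{10}/f_4^5\equiv f_2^2/f_4\pmod{4}$, a series in $q^2$ only. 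Extracting the $q^{2n}$ terms and replacing $q^2$ by $q$ yields the base case $\sum_{n\ge0}\overline{p}_{4,8}(8n)q^n\equiv f_1^2/f_2\pmod{4}$, i.e. \eqref{t41} with $\alpha=0$; as a by-product the odd part shows $\overline{p}_{4,8}(8n+4)\equiv0\pmod{4}$. It is worth recording that $f_1^2/f_2=\varphi(-q)=\sum_{m}(-1)^mq^{m^2}$, since this theta-series description is exactly what governs the residue classes below.

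For the inductive step I would feed $f_1^2/f_2$ into \eqref{8}, namely $f_1^2/f_2=f_9^2/f_{18}-2q\,f_3f_{18}^2/(f_6f_9)$. The part supported on $q^{3n}$ is precisely $f_9^2/f_{18}$; extracting it and replacing $q^3$ by $q$ gives $\sum_{n\ge0}\overline{p}_{4,8}(8\cdot3^{2\alpha+1}n)q^n\equiv f_3^2/f_6\pmod{4}$, which is \eqref{t42}. Now $f_3^2/f_6$ is itself a series in $q^3$ only, so extracting its $q^{3n}$ part and replacing $q^3$ by $q$ returns $f_1^2/f_2$ and advances the exponent to $3^{2(\alpha+1)}$; this closes the induction for \eqref{t41} and \eqref{t42} by mathematical induction on $\alpha$.

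The vanishing statements come from the residue classes modulo $3$. From \eqref{8} the coefficient of every $q^{3n+2}$ in $f_1^2/f_2$ is zero (consistent with $m^2\not\equiv2\pmod{3}$ in $\varphi(-q)$), so extracting the $q^{3n+2}$ terms from \eqref{t41} forces $\overline{p}_{4,8}(8\cdot3^{2\alpha}(3n+2))\equiv0\pmod{4}$. Since $8\cdot3^{2\alpha+1}n+16\cdot3^{2\alpha}=8\cdot3^{2\alpha}(3n+2)$, this simultaneously yields \eqref{t43} and the case $k=2$ of \eqref{t44}.

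The step I expect to be the main obstacle is the case $k=1$ of \eqref{t44}. Extracting the $q^{3n+1}$ terms from \eqref{t41} via \eqref{8} retains the middle term $-2q\,f_3f_{18}^2/(f_6f_9)$, which after $q^3\mapsto q$ gives $\sum_{n\ge0}\overline{p}_{4,8}(8\cdot3^{2\alpha}(3n+1))q^n\equiv -2\,f_1f_6^2/(f_2f_3)\pmod{4}$. This series has constant term $-2\equiv2\pmod{4}$ and so does not vanish; equivalently, the $q^{3n+1}$ class of $\varphi(-q)$ is nonzero exactly when $3n+1$ is a perfect square. In particular $\overline{p}_{4,8}(8)\equiv2\pmod{4}$, so the $k=1$ assertion does not follow from this dissection. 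I would therefore expect the proof to go through cleanly only for $k=2$, and I would state and prove \eqref{t44} in that restricted form, leaving the $q\equiv1\pmod3$ progression (a genuine $2$-adic obstruction coming from the surviving theta-term) as the point requiring separate treatment.
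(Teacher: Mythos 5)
Your argument for \eqref{t41}--\eqref{t43} is exactly the paper's: reduce \eqref{p141} modulo $4$ via \eqref{fp2} to $f_2^2/f_4$, extract the even part to obtain the base case $\sum\overline{p}_{4,8}(8n)q^n\equiv f_1^2/f_2\pmod 4$, and run the induction on $\alpha$ through the $3$-dissection \eqref{8}; the vanishing of the $q^{3n+2}$ class of $f_1^2/f_2=\varphi(-q)$ gives \eqref{t43}. Your objection to \eqref{t44} with $k=1$ is also well founded: the $q^{3n+1}$ term of \eqref{8} survives, so $\overline{p}_{4,8}(8)\equiv-2\pmod 4$, and more generally the coefficient on the class $8\cdot3^{2\alpha}(3n+1)$ is $\pm2$ whenever $3n+1$ is a perfect square. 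The congruence \eqref{t44} as printed is therefore false for $k=1$, and you were right not to attempt it.

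Where you diverge from the paper is in the repair. The paper's own proof of \eqref{t44} extracts the terms $q^{3n+k}$ not from \eqref{t41} but from \eqref{t42}, whose right-hand side $f_3^2/f_6$ is supported entirely on powers of $q^3$; this yields
\begin{equation*}
\overline{p}_{4,8}\bigl(8\cdot3^{2\alpha+1}(3n+k)\bigr)\equiv 0\pmod 4
\end{equation*}
for both $k=1$ and $k=2$. In other words the exponent $2\alpha$ in \eqref{t44} is evidently a typo for $2\alpha+1$, and with that correction the full two-residue statement holds. Your fallback of keeping only $k=2$ is sound but strictly weaker than what is available: for $k=2$ it merely reproduces \eqref{t43}, whereas dissecting the $3^{2\alpha+1}$ series rather than the $3^{2\alpha}$ one recovers the $k=1$ case as well. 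Everything else in your write-up, including the identification $f_1^2/f_2=\varphi(-q)=\sum_m(-1)^mq^{m^2}$ and the observation that squares avoid the residue $2$ modulo $3$, is correct and consistent with the paper's computations.
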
   
  \begin{proof}
   From \eqref{p14}, we have
   \begin{equation}\label{311}
   \sum_{n=0}^{\infty}\overline{p}_{4,8}(4n)q^n=  \dfrac{f_2^{16}}{f_1^{12}f_{4}^5}.
   \end{equation}
   Using \eqref{fp2} in \eqref{312}, we obtain  
   \begin{equation}\label{312}
   \sum_{n=0}^{\infty}\overline{p}_{4,8}(4n)q^n=  \dfrac{f_2^2}{f_{4}}\pmod{4}.
   \end{equation}
   Extracting the terms involving $q^{2n}$ from \eqref{312}, we obtain
   \begin{equation}\label{313}
   \sum_{n=0}^{\infty}\overline{p}_{4,8}(8n)q^n=  \dfrac{f_1^2}{f_{2}}\pmod{4}.  
   \end{equation}
    Congruence \eqref{313} is the $\alpha = 0$ case of \eqref{t41}. Assume that \eqref{t41} is true for all $\alpha \geq 0$.
     Using  \eqref{8} in \eqref{t41}. Then extracting the terms involving powers of $q^{3n}$ from both sides, dividing throughout by $q^3$ and replacing $q^3$ by $q$, we obtain     
     \begin{equation}\label{314}
       \sum_{n=0}^\infty{\overline{p}_{4,8}\left(   8\cdot3^{2\alpha+1}n \right) q^n}\equiv \dfrac{f_3^2}{f_6}\pmod{4},
      \end{equation}
     which proves \eqref{t42}.  Again extracting the terms involving $q^{3n}$ from both sides and replacing $q^3$ by $q$, we obtain
    \begin{equation}\label{315}
          \sum_{n=0}^\infty{\overline{p}_{4,8}\left(   8\cdot3^{2(\alpha+1)}n \right) q^n}\equiv \dfrac{f_1^2}{f_2}\pmod{4},
         \end{equation}
          which is the $\alpha$ + 1 case of \eqref{t41}. Hence, by the principle of mathematical induction, we
                 arrive at \eqref{t41}. Now using \eqref{8} in \eqref{t41} then extracting the terms involving $q^{3n+2}$ from both sides, dividing throughout by $q^2$ and replacing $q^3$ by $q$ we prove \eqref{t43}.
                 Again, extracting the terms involving $q^{3n+k}$ for $k\in\left\lbrace 1,2\right\rbrace $ from both sides \eqref{314} and replacing $q^3$ by $q$ we prove \eqref{t44}.  
  \end{proof}
  \begin{theorem}For all integers $n\geq 0$ and $\alpha\geq 0$, we have
    \begin{equation}\label{t51}
    \sum_{n=0}^\infty{\overline{p}_{4,8}(24\cdot2^{2\alpha}n+8\cdot2^{2\alpha} )q^n\equiv 2\dfrac{f_{3}^3}{f_1}}\pmod{4},
    \end{equation}
    \begin{equation}\label{t52}
        \sum_{n=0}^\infty{\overline{p}_{4,8}(24\cdot2^{2\alpha+1}n+8\cdot2^{2(\alpha+1)})q^n\equiv 2\dfrac{f_{6}^3}{f_2}}\pmod{4},
        \end{equation}
    \begin{equation}\label{t53}
     {\overline{p}_{4,8}(24\cdot2^{2(\alpha+1)}n+20\cdot2^{2(\alpha+1)}))q^n}\equiv 0\pmod{4}.
      \end{equation}   
     \end{theorem}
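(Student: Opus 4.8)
The plan is to anchor the whole theorem on the congruence \eqref{313}, namely $\sum_{n\ge0}\overline{p}_{4,8}(8n)q^n\equiv f_1^2/f_2\pmod4$, which is already available from the earlier theorem, and to peel off the progression $24n+8=8(3n+1)$ by a single $3$-dissection followed by a two-step $2$-dissection induction on $\alpha$.

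First I would establish the base case $\alpha=0$ of \eqref{t51}. Since $24n+8=8(3n+1)$, the sequence $\overline{p}_{4,8}(24n+8)$ is exactly the subprogression $3n+1$ inside $\overline{p}_{4,8}(8n)$, so I substitute the $3$-dissection \eqref{8}, $f_1^2/f_2=f_9^2/f_{18}-2q\,f_3f_{18}^2/(f_6f_9)$, into \eqref{313}. The first summand is a series in $q^9$ and carries only the residue $0\pmod3$, so all of the $q^{3n+1}$ mass sits in the second summand. Extracting the terms $q^{3n+1}$, dividing by $q$ and replacing $q^3$ by $q$ yields $\sum_{n\ge0}\overline{p}_{4,8}(24n+8)q^n\equiv-2\,f_1f_6^2/(f_2f_3)\pmod4$.

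The one genuinely nontrivial step is to recognize this as the claimed form $2f_3^3/f_1$. Modulo $4$ we have $-2\equiv2$, so it suffices to prove $f_1f_6^2/(f_2f_3)\equiv f_3^3/f_1\pmod2$. Using \eqref{fp1} in the form $f_k^2\equiv f_{2k}\pmod2$, together with the fact that a congruence of power series with unit constant term is preserved under inversion, both sides reduce to $f_1f_3f_6/f_2\pmod2$: on the left $1/f_3\equiv f_3/f_6$ and $f_6^2\equiv f_{12}$ give $f_1f_6^2/(f_2f_3)\equiv f_1f_3f_6/f_2$, while on the right $1/f_1\equiv f_1/f_2$ and $f_3^3\equiv f_3f_6$ give $f_3^3/f_1\equiv f_1f_3f_6/f_2$. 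This settles $\alpha=0$ in \eqref{t51}.

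For the induction, assume \eqref{t51} for a fixed $\alpha$. I substitute the $2$-dissection \eqref{5}, $f_3^3/f_1=f_4^3f_6^2/(f_2^2f_{12})+q\,f_{12}^3/f_4$, into the right side of \eqref{t51}; its first summand is even in $q$ and the second is odd. Extracting the odd part, dividing by $q$ and sending $q^2\mapsto q$ turns $2f_3^3/f_1$ into $2f_6^3/f_2$ and the index $24\cdot2^{2\alpha}n+8\cdot2^{2\alpha}$ into $24\cdot2^{2\alpha+1}n+8\cdot2^{2(\alpha+1)}$, which is precisely \eqref{t52}. Finally, since $f_6^3/f_2$ is a series in $q^2$, its odd part vanishes and its even part is itself: extracting the even terms of \eqref{t52} and sending $q^2\mapsto q$ returns $2f_3^3/f_1$ with index $24\cdot2^{2(\alpha+1)}n+8\cdot2^{2(\alpha+1)}$, i.e. \eqref{t51} at $\alpha+1$, closing the induction, whereas extracting the odd terms gives $0$ on the right and, after the short index computation $24\cdot2^{2\alpha+1}(2n+1)+8\cdot2^{2(\alpha+1)}=24\cdot2^{2(\alpha+1)}n+20\cdot2^{2(\alpha+1)}$, yields \eqref{t53}. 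The main obstacle is the base-case identification $-2\,f_1f_6^2/(f_2f_3)\equiv2f_3^3/f_1\pmod4$; everything after it is routine dissect-and-extract bookkeeping.
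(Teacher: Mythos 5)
Your proposal is correct and follows essentially the same route as the paper: anchor at \eqref{313}, apply the 3-dissection \eqref{8} and extract $q^{3n+1}$ for the base case, then induct via the 2-dissection \eqref{5} by alternately taking odd and even parts to obtain \eqref{t52}, the $\alpha+1$ case of \eqref{t51}, and \eqref{t53}. Your explicit verification that $-2f_1f_6^2/(f_2f_3)\equiv 2f_3^3/f_1\pmod 4$ via $f_k^2\equiv f_{2k}\pmod 2$ is a welcome expansion of the step the paper dispatches with a bare citation of \eqref{fp1}.
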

    \begin{proof}
  Employing \eqref{8} in \eqref{313}, we obtain
 \begin{equation}\label{3a1}
    \sum_{n=0}^{\infty}\overline{p}_{4,8}(8n)q^n=  \left(  \dfrac{f_9^2}{f_{18}}+2q\dfrac{f_3f_{18}^2}{f_6f_9}\right) \pmod{4}. 
 \end{equation}
  Extracting the terms involving $q^{3n+1}$ from \eqref{3a1}, we obtain
  \begin{equation}\label{3a2}
   \sum_{n=0}^{\infty}\overline{p}_{4,8}(24n+8)q^n=   2\dfrac{f_1f_{6}^2}{f_2f_3} \pmod{4}.
   \end{equation}
  Using \eqref{fp1} in \eqref{3a2}, we obtain
 
  \begin{equation}\label{3a4}
   \sum_{n=0}^{\infty}\overline{p}_{4,8}(24n+8)q^n=   2\dfrac{f_{3}^3}{f_1} \pmod{4}.
   \end{equation}
    Congruence \eqref{3a4} is the $\alpha = 0$ case of \eqref{t51}. Assume that \eqref{t51} is true for all $\alpha \geq 0$. Using  \eqref{5} in \eqref{t51}, we obtain   
  \begin{equation}\label{3a5}
     \sum_{n=0}^\infty{\overline{p}_{4,8}(24\cdot2^{2\alpha}n+8\cdot2^{2\alpha} )q^n\equiv 2\left(\dfrac{f_4^3f_6^2}{f_2^2f_{12}}+q\dfrac{f_{12}^3}{f_4} \right)}\pmod{4}.
     \end{equation}  
  Then extracting the terms involving $q^{2n+1}$ from both sides, dividing throughout by $q$ and replacing $q^2$ by $q$, we obtain
   \begin{equation}\label{3a6}
       \sum_{n=0}^\infty{\overline{p}_{4,8}(24\cdot2^{2\alpha+1}n+8\cdot2^{2\alpha+2} )q^n\equiv 2\dfrac{f_{6}^3}{f_2}}\pmod{4},
       \end{equation} 
  which proves \eqref{t52}.
 Again extracting the terms involving $q^{2n}$ from both sides, dividing throughout by $q$ and replacing $q^2$ by $q$, we obtain  
 \begin{equation}\label{3a7}
        \sum_{n=0}^\infty{\overline{p}_{4,8}(24\cdot2^{2(\alpha+1)}n+8\cdot2^{2(\alpha+1)} )q^n\equiv 2\dfrac{f_{3}^3}{f_1}}\pmod{4},
        \end{equation} 
  which is the $\alpha$ + 1 case of \eqref{t51}. Hence, by the principle of mathematical induction, we
                  arrive at \eqref{t51}. Then extracting the terms involving $q^{2n+1}$ from \eqref{3a6}, dividing throughout by $q$ and replacing $q^2$ by $q$, we prove \eqref{t53}.                  
  \end{proof}
  
  \begin{theorem} If $j\in \{1,2,3,4,5,6,7\}$, then for all integers $n\geq 0$ and $\alpha\geq 0$, we have     
      \begin{equation}\label{t61}
     \overline{p}_{4,8}\left( 48(8n+j)+8\right) q^n= 0\pmod{4},
       \end{equation}   
      \begin{equation}\label{t62}
          \sum_{n=0}^{\infty}\overline{p}_{4,8}\left( 384\cdot p^{2\alpha} n+8(2p^{2\alpha}-1)\right)  q^n \equiv f_1 \pmod{4},
          \end{equation}
      \begin{equation}\label{t63}
        \overline{p}_{4,8}\left( 384\cdot p^{2\alpha+1} (pn+j)+8(2p^{2\alpha}-1)\right)  q^n \equiv 0 \pmod{4}.
        \end{equation}   
       \end{theorem}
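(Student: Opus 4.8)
The plan is to mirror the argument already used for \eqref{t71}--\eqref{t72}: first manufacture a base congruence of the shape $\sum_{n\ge0}\overline{p}_{4,8}(Nn+c)q^n\equiv 2f_1\pmod 4$ out of the eta-quotient already recorded for the progression $24n+8$, and then propagate it along an arbitrary prime $p\ge 5$ by means of the dissection \eqref{pp}. The vanishing statements \eqref{t61} and \eqref{t63} will fall out as the ``discarded residue classes'' of the two extraction steps that drive the base case and the induction, respectively.

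First I would start from \eqref{3a4}, namely $\sum_{n\ge0}\overline{p}_{4,8}(24n+8)q^n\equiv 2\,\dfrac{f_3^3}{f_1}\pmod 4$, and apply the $2$-dissection \eqref{5} to $f_3^3/f_1$. Extracting the even part produces a congruence for $\overline{p}_{4,8}(48n+8)$ whose right-hand side is $2\,\dfrac{f_2^3f_3^2}{f_1^2f_6}$; repeatedly invoking $f_k^2\equiv f_{2k}\pmod 2$ from \eqref{fp1} collapses this eta-quotient to $2f_4$, then $2f_2$, then $2f_1$ under successive bisections. Since the whole progression $48(8n+j)+8$ sits inside $24k+8$, every value is even, so the right-hand side is genuinely a pure $\pm 2f_1\pmod 4$ rather than a theta sum; the six residue classes $q^{8n+j}$ that collect only the odd pentagonal exponents contribute nothing modulo $4$, and this is precisely the content of \eqref{t61}, while the surviving class furnishes the $\alpha=0$ instance of \eqref{t62}. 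The careful bookkeeping of \emph{which} arithmetic progression and \emph{which} power of $2$ one lands on at each bisection is the first place to be vigilant, and it is here that the shift $8(2p^{2\alpha}-1)$ must be pinned down.

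For the inductive step of \eqref{t62} I would substitute \eqref{pp} for $f_1$ on the right-hand side and extract the terms whose exponent is $\equiv (p^2-1)/24\pmod p$. The crucial input is the last assertion accompanying \eqref{pp} from \cite{cu}: for $-(p-1)/2\le k\le(p-1)/2$ with $k\ne(\pm p-1)/2$ one has $(3k^2+k)/2\not\equiv(p^2-1)/24\pmod p$, so that in this residue class only the distinguished term $(-1)^{(\pm p-1)/6}q^{(p^2-1)/24}f_{p^2}$ survives. Dividing by $q^{(p^2-1)/24}$ and sending $q^p\mapsto q$ advances the modulus of the progression from $p^{2\alpha}$ to $p^{2\alpha+1}$, while the shift telescopes from $8(2p^{2\alpha}-1)$ to $8(2p^{2(\alpha+1)}-1)$ exactly as $8+16(p^2-1)\sum_i p^{2i}$ predicts; a further extraction of the class $q^{pn}$, legitimate because $f_{p^2}$ carries only exponents divisible by $p$, returns the right-hand side to $2f_1$ and closes the induction. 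Comparing instead the classes $q^{pn+j}$ for $1\le j\le p-1$ in the same intermediate identity yields \eqref{t63}, since those classes are empty on the right.

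The main obstacle I anticipate is the base case rather than the induction: one must chase the dissections through \eqref{5} and \eqref{fp1} carefully enough both to confirm that the exceptional class really reduces to a single eta-quotient $\pm 2f_1$ and to check that each discarded class vanishes modulo $4$, which is what \eqref{t61} records. A secondary, milder point is that \eqref{pp} introduces the sign $(-1)^{(\pm p-1)/6}$; but because the surviving right-hand side carries the factor $2$ and only the parity of the exponent matters, one has $-2f_1\equiv 2f_1\pmod 4$, so the sign is absorbed and the recursion is genuinely self-reproducing.
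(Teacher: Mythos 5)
Your overall route is the same as the paper's: start from $\sum_{n\ge0}\overline{p}_{4,8}(24n+8)q^n\equiv 2f_3^3/f_1\pmod 4$, dissect with \eqref{5}, collapse the even part to a single eta factor via \eqref{fp1}, and then run the $\alpha$-induction through \eqref{pp} exactly as in the proof of \eqref{t71}--\eqref{t72}. Your account of the inductive step is sound: only the distinguished term $(-1)^{(\pm p-1)/6}q^{(p^2-1)/24}f_{p^2}$ survives in the extracted residue class (the exclusion should read $k\neq(\pm p-1)/6$, not $/2$), the shift does telescope to $8(2p^{2(\alpha+1)}-1)$, and the sign is harmless because of the overall factor $2$.

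The genuine gap is in the base case, and it is not a matter of ``careful bookkeeping'' to be deferred: your own extraction of the even part is correct and gives $\sum_{n\ge0}\overline{p}_{4,8}(48n+8)q^n\equiv 2f_2^3f_3^2/(f_1^2f_6)\equiv 2f_4\pmod 4$ (the paper instead records $2f_4^3f_6^2/(f_2^2f_{12})\equiv 2f_8$, i.e.\ it does not halve the subscripts after replacing $q^2$ by $q$), but the theorem's constants $384=48\cdot 8$ and the full range $j\in\{1,\dots,7\}$ require $2f_8$, not $2f_4$. From $2f_4$ your successive bisections land at $\sum\overline{p}_{4,8}(192n+8)q^n\equiv 2f_1$, not at $384n+8$, and the discarded classes only give $\overline{p}_{4,8}(48m+8)\equiv0\pmod 4$ for $m\not\equiv0\pmod 4$, i.e.\ for $j\in\{1,2,3,5,6,7\}$. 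The class $j=4$ is not killed: $2f_4$ predicts $\overline{p}_{4,8}(48(8n+4)+8)\equiv\pm2\pmod 4$ whenever $2n+1$ is a generalized pentagonal number (already at $n=0$, i.e.\ $\overline{p}_{4,8}(200)$). Your sentence asserting that ``the six residue classes $q^{8n+j}$ that collect only the odd pentagonal exponents contribute nothing modulo $4$'' silently drops $j=4$ and never reconciles $192$ with $384$, so as written the proposal does not establish \eqref{t61} at $j=4$ nor \eqref{t62} with the stated modulus. You must either locate a lost factor of two in the dissection (I do not believe there is one) or conclude that the constants in the statement need to be corrected to match the $2f_4$ you actually obtain; simply asserting the stated form is a step that fails.
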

   \begin{proof}
 Employing \eqref{5} in \eqref{3a4}, we obtain
 \begin{equation}\label{3a8}
    \sum_{n=0}^{\infty}\overline{p}_{4,8}(24n+8)q^n=   2\left(\dfrac{f_4^3f_6^2}{f_2^2f_{12}}+q\dfrac{f_{12}^3}{f_4} \right) \pmod{4}.
    \end{equation}
Extracting the terms involving $q^{2n}$ from \eqref{3a8}, we obtain
\begin{equation}\label{3a9}
   \sum_{n=0}^{\infty}\overline{p}_{4,8}(48n+8)q^n= 2\dfrac{f_4^3f_6^2}{f_2^2f_{12}}\pmod{4}.
\end{equation}
Employing \eqref{fp1} in \eqref{3a9}, we obtain
\begin{equation}\label{3b2}
         \sum_{n=0}^{\infty}\overline{p}_{4,8}(48n+8)q^n= 2f_8\pmod{4}.
         \end{equation}
   Extracting the terms involving $q^{8n+j}$ for $j\in \{1,2,3,4,5,6,7\}$ from \eqref{3b2}, we arrive at \eqref{t61}.       
 Again extracting the terms involving $q^{8n}$ from \eqref{3b2}, we obtain 
        
  \begin{equation}\label{3b3}
  \sum_{n=0}^{\infty}\overline{p}_{4,8}(384n+8)q^n= 2f_1\pmod{4},
  \end{equation} 
  which is the $\alpha = 0$ case of \eqref{t62}. Assume \eqref{t62} is true for any 
        $\alpha\geq 0$.
       Employing \eqref{pp} in \eqref{3b3}, we obtain
       $$ \hspace{-8cm}
       \sum_{n=0}^{\infty}\overline{p}_{4,8} \left( 384\cdot p^{2\alpha} n+8(2p^{2\alpha}-1)\right) q^n $$
       $$\hspace{3cm}\equiv \Big\{ \sum_{\substack{k=-(p-1)/2 \\ k\neq (\pm p-1)/6}
       }^{k=(p-1)/2}(-1)^kq^{(3k^2+k)/2}f\left( -q^{(3p^2+(6k+1)p)/2},-q^{(3p^2-(6k+1)p)/2}\right)   $$
       \begin{equation}\label{f1f2}
       +(-1)^{(\pm p-1)/6}q^{(p^2-1)/24}f_{p^2} \Big\}\pmod{4}.
       \end{equation}    
       Extracting the term involving $q^{{pn} + (p^2-1)/24}$ from  \eqref{f1f2}, dividing 
       by $q^{(p^2-1)/24}$ and then replacing $q^p$ by $q$, we obtain    
       \begin{equation}\label{3b4}
      \sum_{n=0}^{\infty}\overline{p}_{4,8} \left( 384\cdot p^{2\alpha+1} (n)+8(2p^{2(\alpha+1)}-1)
      \right) q^n \equiv f_p\pmod4.
       \end{equation}
       Extracting the terms involving $q^{pn}$ from \eqref{3b4} and replacing $q^p$ by $q$, we obtain
       \begin{equation}\label{3b5}
       \sum_{n=0}^{\infty}\overline{p}_{4,8} \left( 384\cdot p^{2(\alpha+1)} n+8(2p^{2(\alpha+1)}-1)\right) q^n \equiv f_1\pmod4.
       \end{equation}
       which is the $\alpha$ + 1 case of \eqref{t62}. Thus by principle of mathematical induction,the proof of \eqref{t62} is complete. Extracting the terms involving $q^{pn+j}$, for $ 1\leq j \leq p - 1$, from \eqref{3b4}, we arrive at \eqref{t63}.  
  \end{proof}
 
      \section{Congruences for $\overline{p}_{6,12}(n)$}    
  \begin{theorem}For all integers $n\geq 0$,$\alpha\geq 0$ and $1\le j \le(p-1)$, we have          
           \begin{equation}\label{t81}
           \sum_{n=0}^{\infty}\overline{p}_{6,12}\left( 24\cdot p^{2\alpha} n+p^{2\alpha})\right)  q^n \equiv f_1 \pmod{4},
           \end{equation}
           \begin{equation}\label{t82}
                {\overline{p}_{6,12}\left( p^{2\alpha+1}(24(pn+j)+p) \right) } \equiv 0\pmod{4}.
               \end{equation} 
            \end{theorem}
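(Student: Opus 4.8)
The plan is to establish one clean base congruence on a progression modulo $24$ and then propagate it through every power $p^{2\alpha}$ by an induction built on the prime dissection \eqref{pp}; as in the derivation of \eqref{t62}--\eqref{t63}, this forces $p$ to be a prime with $p\ge 5$, so that $(p^2-1)/24$ is an integer. Setting $j=6$, $k=12$ in \eqref{p10} and simplifying the infinite products gives
\begin{equation*}
\sum_{n=0}^{\infty}\overline{p}_{6,12}(n)q^n=\dfrac{f_2f_6^2f_{24}}{f_1^2f_{12}^3}.
\end{equation*}
The base case I aim for is $\sum_{n\ge0}\overline{p}_{6,12}(24n+1)q^n\equiv 2f_1\pmod 4$, the $\alpha=0$ instance of \eqref{t81}. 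The factor $2$ is forced: the $n=0$ coefficient is $\overline{p}_{6,12}(1)=\overline{p}(1)=2$, so the right side must begin with $2$; it is also harmless for the induction because $-2\equiv 2\pmod4$, which will make the whole argument insensitive to signs.

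To reach the base case I would peel off $f_1^{-2}$ with \eqref{1}. Since every other theta-quotient in $f_2f_6^2f_{24}/f_{12}^3$ has even index, the term carrying $2q$ in \eqref{1} isolates the odd part, and after $q^2\mapsto q$ one finds
\begin{equation*}
\sum_{n=0}^{\infty}\overline{p}_{6,12}(2n+1)q^n\equiv 2\,\dfrac{f_3^2f_{12}f_2^2f_8^2}{f_1^4f_6^3f_4}\pmod 4 .
\end{equation*}
Because the coefficient is $2$, I may reduce the quotient modulo $2$ via \eqref{fp1}; it collapses to $f_{16}/f_4$, a series in $q^4$. Extracting $q^{4n}$ and replacing $q^4$ by $q$ gives $\sum_{n\ge0}\overline{p}_{6,12}(8n+1)q^n\equiv 2f_4/f_1\equiv 2f_1^3\pmod 4$, using $f_4\equiv f_1^4\pmod2$. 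Finally I $3$-dissect $f_1^3$ by \eqref{7}: its part in $q^{3n}$ is $P(q^3)$, and since $P(q)\equiv f_1\pmod 2$ (the factors $\varphi$ are $\equiv1\pmod2$), extracting $q^{3n}$ yields exactly $\sum_{n\ge0}\overline{p}_{6,12}(24n+1)q^n\equiv 2f_1\pmod 4$.

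For the inductive step I would assume \eqref{t81} for a given $\alpha$ and substitute \eqref{pp} for the $f_1$ on its right-hand side. Exactly as in the derivation of \eqref{3b4}, the accompanying non-congruence in that Lemma guarantees that among all the summands of \eqref{pp} only the distinguished term $(-1)^{(\pm p-1)/6}q^{(p^2-1)/24}f_{p^2}$ has $q$-exponent congruent to $(p^2-1)/24$ modulo $p$, while the remaining exponents $(3k^2+k)/2$ avoid that residue. Extracting $q^{pn+(p^2-1)/24}$, dividing by $q^{(p^2-1)/24}$ and replacing $q^p$ by $q$ sends the progression $24p^{2\alpha}n+p^{2\alpha}$ to $24p^{2\alpha+1}n+p^{2\alpha+2}$ and gives
\begin{equation*}
\sum_{n=0}^{\infty}\overline{p}_{6,12}\bigl(24p^{2\alpha+1}n+p^{2\alpha+2}\bigr)q^n\equiv 2f_p\pmod 4 ,
\end{equation*}
the sign $(-1)^{(\pm p-1)/6}$ disappearing because $-2\equiv2\pmod4$. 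As $f_p$ is a series in $q^p$, extracting $q^{pn}$ and setting $q^p\mapsto q$ returns \eqref{t81} with $\alpha+1$, closing the induction, while extracting $q^{pn+j}$ for $1\le j\le p-1$ from the displayed intermediate congruence annihilates every coefficient modulo $4$ and proves \eqref{t82}.

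The only genuinely delicate part is the base case: one must select \eqref{1} (rather than \eqref{2} or \eqref{3}) precisely so that the odd part carries the factor $2$, and then verify that the resulting quotient really does reduce to $f_{16}/f_4$ modulo $2$ before the concluding $3$-dissection. Once that chain of dissections is confirmed, the induction is a mechanical repetition of the argument already used for \eqref{t62}, with the pleasant simplification that the leading coefficient $2$ renders every step independent of the sign produced by \eqref{pp}.
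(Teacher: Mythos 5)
Your proof is correct, and the induction on $\alpha$ via \eqref{pp} is exactly the paper's argument; the difference lies entirely in how you reach the base case $\sum_{n\ge0}\overline{p}_{6,12}(24n+1)q^n\equiv 2f_1\pmod 4$. The paper first applies the $3$-dissection \eqref{4} of $f_2/f_1^2$ to pick off the arithmetic progression $3n+1$, reduces to $2f_3^3/f_1\pmod 4$ as in \eqref{z5}, and then descends $3n+1\to 6n+1\to 12n+1\to 24n+1$ by iterating the $2$-dissection \eqref{5}. You reverse the order: you $2$-dissect $1/f_1^2$ via \eqref{1}, observe that the resulting odd part reduces mod $2$ to a series in $q^4$ (so the progression jumps from $2n+1$ to $8n+1$ in one step), land on $2f_1^3$, and only then invoke the $3$-dissection \eqref{7} together with $P(q)\equiv f_1\pmod 2$. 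Both chains are valid and of comparable length; yours avoids Lemma \eqref{4} and \eqref{5} entirely at the cost of needing the auxiliary identity for $P(q)$, and it has the mild advantage that the intermediate congruence on $8n+1$ comes for free from the support of $f_{16}/f_4$. You are also right to flag the coefficient: the base case is $2f_1$, not $f_1$ (indeed $\overline{p}_{6,12}(1)=2$), which agrees with the paper's own \eqref{z9} but not with the statement of \eqref{t81} as printed; since $\pm2\equiv2\pmod4$ this does not disturb the induction or the vanishing in \eqref{t82}, but the right-hand side of \eqref{t81} should carry the factor $2$, and the hypothesis that $p\ge5$ is prime with $24\mid p^2-1$ should be made explicit, as you note.
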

\begin{proof}
 Setting $j=6$ and $k=12$ in \eqref{p10}, we note that
 \begin{equation*}
 \sum_{n=0}^{\infty}\overline{p}_{6,12}(n)q^n=\dfrac{(-q;q)_{\infty}(q^6;q^{12})_{\infty}}{(q;q)_{\infty}(-q^6;q^{12})_{\infty}}.\end{equation*}
 Applying elementary $q$-operation and using \eqref{fff}, we obtain
 \begin{equation}\label{z1}
 \sum_{n=0}^{\infty}\overline{p}_{6,12}(n)q^n=\dfrac{f_2f_6^2f_{24}}{f_1^2f_{12}^3}.
 \end{equation}
 Employing \eqref{4} in \eqref{z1}, we obtain
 \begin{equation}\label{z2}
 \sum_{n=0}^{\infty}\overline{p}_{6,12}(n)q^n=\dfrac{f_6^2f_{24}}{f_{12}^3}\left( \dfrac{f_6^4f_9^6}{f_3^8f_{18}^3}+2q\dfrac{f_6^3f_9^3}{f_3^7}+4q^2\dfrac{f_6^2f_{18}^3}{f_3^6}\right) .
 \end{equation}
 Extracting the terms involving $q^{3n+1}$ from \eqref{z2}, we obtain
 \begin{equation}\label{z3}
 \sum_{n=0}^{\infty}\overline{p}_{6,12}(3n+1)q^n= 2\dfrac{f_2^5f_3^3f_8}{f_1^7f_4^3}.
 \end{equation}
 Employing \eqref{fp1} in \eqref{z3}, we obtain 
 \begin{equation}\label{z5}
 \sum_{n=0}^{\infty}\overline{p}_{6,12}(3n+1)q^n\equiv 2\dfrac{f_3^3}{f_1}\pmod{4}.
 \end{equation}
 Employing \eqref{5} in \eqref{z5}, we obtain
 \begin{equation}\label{z6}
 \sum_{n=0}^{\infty}\overline{p}_{6,12}(3n+1)q^n\equiv 2\dfrac{f_4^3f_6^2}{f_2^2f_{12}}+2q\dfrac{f_{12}^3}{f_4}\pmod{4}.
 \end{equation}
 Extracting the terms involving $q^{2n}$ from \eqref{z6} and using \eqref{fp1}, we obtain
 \begin{equation}\label{z7}
 \sum_{n=0}^{\infty}\overline{p}_{6,12}(6n+1)q^n \equiv 2 f_2^2\pmod{4}.
 \end{equation}
 Again, extracting the terms involving $q^{2n}$ from \eqref{z7} and using \eqref{fp1}, we obtain
 \begin{equation}\label{z8}
 \sum_{n=0}^{\infty}\overline{p}_{6,12}(12n+1)q^n\equiv  2f_2\pmod{4}.
 \end{equation}
 Again, extracting the terms involving $q^{2n}$ from \eqref{z8}, we obtain
 \begin{equation}\label{z9}
 \sum_{n=0}^{\infty}\overline{p}_{6,12}(24n+1)q^n\equiv  2f_1\pmod{4},
 \end{equation}
 which is the $\alpha = 0$ case of \eqref{t81}. Assume \eqref{t81} is true for any 
      $\alpha\geq 0$.
     Employing \eqref{pp} in \eqref{t81}, we obtain
     $$ \hspace{-8cm}
     \sum_{n=0}^\infty{\overline{p}_{6,12}\left( p^{2\alpha}(24n+ 1) \right) q^n} $$$$\hspace{3cm}\equiv \Big\{ \sum_{\substack{k=-(p-1)/2 \\ k\neq (\pm p-1)/6}
     }^{k=(p-1)/2}(-1)^kq^{(3k^2+k)/2}f\left( -q^{(3p^2+(6k+1)p)/2},-q^{(3p^2-(6k+1)p)/2}\right)
     $$
     \begin{equation}\label{f1f}
     +(-1)^{(\pm p-1)/6}q^{(p^2-1)/24}f_{p^2} \Big\}\pmod{4}.
     \end{equation}    
     Extracting the term involving $q^{{pn} + (p^2-1)/24}$ from  \eqref{f1f}, dividing 
     by $q^{(p^2-1)/24}$ and then replacing $q^p$ by $q$, we obtain    
     \begin{equation}\label{z10}
    \sum_{n=0}^\infty{\overline{p}_{6,12}\left( p^{2\alpha+1}(24n+p) \right) q^n} \equiv f_{p}\pmod{4}.
     \end{equation}
     Extracting the terms involving $q^{pn}$ from \eqref{z10} and replacing $q^p$ by $q$, we obtain
     \begin{equation}\label{z11}
      \sum_{n=0}^\infty{\overline{p}_{6,12}\left( p^{2(\alpha+1)}(24n+1) \right) q^n} \equiv f_{1}\pmod{4},\end{equation}
     which is the $\alpha$ + 1 case of \eqref{t81}. Thus by principle of mathematical induction,the proof of \eqref{t81} is complete.
    Extracting the terms involving $q^{pn+j}$, for $ 1\leq j \leq p - 1$, from \eqref{z10}, we arrive at \eqref{t82}.
     \end{proof}

 \begin{theorem}For all integers $n\geq 0$ and $\alpha\geq 0$, we have           
            \begin{equation}\label{t111}
            \sum_{n=0}^{\infty}\overline{p}_{6,12}\left( 3\cdot 2^{2\alpha} n+2^{2\alpha})\right)  q^n \equiv 2\dfrac{f_{3}^3}{f_1}\pmod{4},
            \end{equation}
            \begin{equation}\label{t112}
              \sum_{n=0}^{\infty}\overline{p}_{6,12}\left( 3\cdot 2^{2\alpha+1} n+2^{2(\alpha+1)})\right)  q^n \equiv 2\dfrac{f_{6}^3}{f_2}\pmod{4}, 
                \end{equation}
             \begin{equation}\label{t112}
                          \overline{p}_{6,12}\left( 3\cdot 2^{2(\alpha+1)}) n+5\cdot2^{2(\alpha+1)})\right) \equiv 0\pmod{4}. 
                            \end{equation}           
             \end{theorem}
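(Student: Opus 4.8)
The plan is to argue by induction on $\alpha$, mirroring exactly the argument already carried out for $\overline{p}_{4,8}$ in \eqref{t51}--\eqref{t53}. The base case is in hand for free: the congruence \eqref{z5}, namely $\sum_{n=0}^\infty \overline{p}_{6,12}(3n+1)q^n\equiv 2f_3^3/f_1 \pmod 4$, is precisely the $\alpha=0$ instance of \eqref{t111}, since $3n+1=3\cdot 2^{0}n+2^{0}$. So I would assume \eqref{t111} for a fixed $\alpha\ge 0$ and aim to produce first the companion congruence in $f_6^3/f_2$ and then the $\alpha+1$ instance of \eqref{t111}.

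First I would substitute the $2$-dissection \eqref{5}, $f_3^3/f_1=f_4^3f_6^2/(f_2^2 f_{12})+q\,f_{12}^3/f_4$, into the right side of \eqref{t111}, obtaining $2f_4^3f_6^2/(f_2^2 f_{12})+2q\,f_{12}^3/f_4 \pmod 4$. Extracting the terms whose exponent of $q$ is odd, dividing by $q$ and replacing $q^2$ by $q$ sends the surviving piece $2q\,f_{12}^3/f_4$ to $2f_6^3/f_2$, while the index transforms as $3\cdot2^{2\alpha}(2n+1)+2^{2\alpha}=3\cdot2^{2\alpha+1}n+2^{2(\alpha+1)}$; this is exactly the middle congruence of the theorem. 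The crucial structural observation is that $f_6^3/f_2$ is a power series in $q^2$ alone. Hence extracting its even-exponent part is vacuous, and the substitution $q^2\mapsto q$ turns $2f_6^3/f_2$ straight back into $2f_3^3/f_1$, while the index becomes $3\cdot2^{2\alpha+1}(2n)+2^{2(\alpha+1)}=3\cdot2^{2(\alpha+1)}n+2^{2(\alpha+1)}$. That is the $\alpha+1$ instance of \eqref{t111}, which closes the induction; the two families \eqref{t111} and its $f_6^3/f_2$ companion then follow for all $\alpha\ge 0$ by the principle of mathematical induction.

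Finally, the same evenness of $f_6^3/f_2$ shows that in the middle congruence every coefficient of an \emph{odd} power of $q$ vanishes modulo $4$. Extracting the terms $q^{2n+1}$ and tracking the index via $3\cdot2^{2\alpha+1}(2n+1)+2^{2(\alpha+1)}=3\cdot2^{2(\alpha+1)}n+5\cdot2^{2\alpha+1}$ then yields the vanishing congruence. I expect no genuine analytic obstacle: the only identity required is \eqref{5}, supplemented by the elementary evenness of $f_6^3/f_2$, and the sole real labor is the bookkeeping of the arithmetic progressions through each $q^2\mapsto q$ step. The one point I would double-check is the constant in the last congruence, which this computation produces naturally as $5\cdot2^{2\alpha+1}$ rather than the printed $5\cdot2^{2(\alpha+1)}$, suggesting a minor typographical slip in the exponent.
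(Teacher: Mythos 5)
Your argument is correct and is exactly the paper's route: the base case is \eqref{z5}, and the induction proceeds by substituting the 2-dissection \eqref{5}, extracting the odd and even parts, and exploiting that $f_6^3/f_2$ is a series in $q^2$, precisely as in the proof of \eqref{t51}--\eqref{t53} to which the paper defers. Your flag on the last congruence is also right: the extraction of $q^{2n+1}$ yields the constant $5\cdot 2^{2\alpha+1}$, not the printed $5\cdot 2^{2(\alpha+1)}$, so the printed exponent is a slip (the printed statement is nonetheless true, but only trivially, since its argument is $\equiv 2\pmod 3$ and \eqref{a15} already gives $\overline{p}_{6,12}(3n+2)\equiv 0\pmod 4$).
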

 
 \begin{proof}      
From \eqref{z5}, we obtain 
 \begin{equation}\label{a10}
 \sum_{n=0}^{\infty}\overline{p}_{6,12}(3n+1)q^n\equiv  2\dfrac{f_{3}^3}{f_1}\pmod{4}.
 \end{equation} The remaining part of the proof is similar to proofs of the identies \eqref{t51}-\eqref{t53}.

\end{proof}
\begin{theorem}For all integers $n\geq 0$ and $\alpha\geq 0$, we have

  \begin{equation}\label{t121}
            \sum_{n=0}^{\infty}\overline{p}_{6,12}\left( 2^{2\alpha+1}(3n+1)\right)  q^n \equiv 4\dfrac{f_{3}^3}{f_1}\pmod{8},
            \end{equation}
            \begin{equation}\label{t122}
             \sum_{n=0}^{\infty}\overline{p}_{6,12}\left( 2^{2\alpha+2}(3n+2)\right)  q^n \equiv 4\dfrac{f_{6}^3}{f_2}\pmod{8}, 
                \end{equation}
             \begin{equation}\label{t123}
                          \overline{p}_{6,12}\left( 2^{2\alpha+2}(6n+5)\right)  \equiv 0\pmod{8}. 
                            \end{equation}          
\end{theorem}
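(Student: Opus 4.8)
The plan is to nail down the base case $\alpha=0$ of \eqref{t121} by isolating the residue class $3n+2$ in the three-dissection \eqref{z2}, and then to run a two-dissection induction on $\alpha$ that is structurally identical to the argument already carried out in \eqref{t51}--\eqref{t53}, driven by the splitting \eqref{5} of $f_3^3/f_1$.

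For the base case I would extract from \eqref{z2} the contribution of the third bracketed summand $4q^2 f_6^2 f_{18}^3/f_3^6$, which is the only one feeding the class $q^{3n+2}$ since the prefactor and all three bracketed terms are series in $q^3$. Dividing by $q^2$ and sending $q^3\mapsto q$ gives the exact identity
\begin{equation*}
\sum_{n=0}^{\infty}\overline{p}_{6,12}(3n+2)q^n = 4\,\frac{f_2^4 f_6^3 f_8}{f_1^6 f_4^3}.
\end{equation*}
Because of the explicit factor $4$, it suffices to reduce the cofactor modulo $2$. Repeated use of \eqref{fp1} (namely $f_2^4\equiv f_8$, $f_6^3\equiv f_6 f_{12}$, $f_1^6\equiv f_2 f_4$, $f_4^3\equiv f_4 f_8$, and $f_4^2\equiv f_8$) collapses the quotient to $f_6 f_{12}/f_2$, so
\begin{equation*}
\sum_{n=0}^{\infty}\overline{p}_{6,12}(3n+2)q^n \equiv 4\,\frac{f_6 f_{12}}{f_2}\pmod 8.
\end{equation*}
The crucial observation is that $f_6 f_{12}/f_2$ involves only even powers of $q$ (each of $f_6$, $f_{12}$ and $1/f_2$ does). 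Hence the odd part vanishes, while extracting $q^{2n}$ and setting $q^2\mapsto q$ recovers the entire series as $4 f_3 f_6/f_1$; one final application of $f_6\equiv f_3^2\pmod 2$ from \eqref{fp1} yields
\begin{equation*}
\sum_{n=0}^{\infty}\overline{p}_{6,12}(6n+2)q^n\equiv 4\,\frac{f_3^3}{f_1}\pmod 8,
\end{equation*}
which is \eqref{t121} at $\alpha=0$.

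For the inductive step I would assume \eqref{t121} for a given $\alpha$ and substitute \eqref{5}, splitting the right side into the even piece $4 f_4^3 f_6^2/(f_2^2 f_{12})$ and the odd piece $4q\, f_{12}^3/f_4$. Extracting $q^{2n+1}$, dividing by $q$ and replacing $q^2\mapsto q$ carries the argument $2^{2\alpha+1}(3(2n+1)+1)=2^{2\alpha+2}(3n+2)$ and produces \eqref{t122},
\begin{equation*}
\sum_{n=0}^{\infty}\overline{p}_{6,12}\bigl(2^{2\alpha+2}(3n+2)\bigr)q^n\equiv 4\,\frac{f_6^3}{f_2}\pmod 8.
\end{equation*}
Since $f_6^3/f_2$ is again supported on even powers, its full $q^{2n}$ part (with $q^2\mapsto q$) gives $4 f_3^3/f_1$ at argument $2^{2\alpha+2}(6n+2)=2^{2(\alpha+1)+1}(3n+1)$, which is \eqref{t121} for $\alpha+1$ and closes the induction, while its vanishing $q^{2n+1}$ part yields $\overline{p}_{6,12}(2^{2\alpha+2}(6n+5))\equiv 0\pmod 8$, which is \eqref{t123}.

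The induction is the routine part, being a verbatim analogue of \eqref{t51}--\eqref{t53}. The main obstacle is the base case: one must verify that the $3n+2$ progression of \eqref{z2} carries the factor $4$ exactly, so that reducing the cofactor only modulo $2$ is legitimate, and then track the several applications of \eqref{fp1} — in particular the collapses $f_2^4 f_6^3 f_8/(f_1^6 f_4^3)\equiv f_6 f_{12}/f_2$ and $f_3 f_6/f_1\equiv f_3^3/f_1$ modulo $2$ — alongside the parity bookkeeping that both $f_6 f_{12}/f_2$ and $f_6^3/f_2$ contain only even powers of $q$. It is exactly this even-power structure that forces the clean even/odd split at each dissection and drives the whole scheme.
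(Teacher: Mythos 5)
Your proposal is correct and follows essentially the same route as the paper: the base case is obtained by extracting the $q^{3n+2}$ class from \eqref{z2} and reducing the resulting cofactor $f_2^4f_6^3f_8/(f_1^6f_4^3)$ modulo $2$ via \eqref{fp1} (the paper writes the reduction as $4f_6^3/f_2$, congruent mod $8$ to your $4f_6f_{12}/f_2$), after which the even/odd dissection and the induction via \eqref{5} are exactly the argument the paper invokes by reference to \eqref{t51}--\eqref{t53}. No discrepancies.
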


\begin{proof}
 Extracting the terms involving $q^{3n+2}$ from \eqref{z2}, we obtain
 \begin{equation}\label{a131}
 \sum_{n=0}^{\infty}\overline{p}_{6,12}(3n+2)q^n=4\dfrac{f_2^4f_{6}^3f_8}{f_1^6f_4^3}.
 \end{equation}
 Employing \eqref{fp2} in \eqref{a131}, we obtain
\begin{equation}\label{a151}
 \sum_{n=0}^{\infty}\overline{p}_{6,12}(3n+2)q^n\equiv4\dfrac{f_{6}^3}{f_2}\pmod{8}.\end{equation} 
 Extracting the terms involving $q^{2n}$ from \eqref{a151}, we obtain 
 \begin{equation}\label{a161}
 \sum_{n=0}^{\infty}\overline{p}_{6,12}(6n+2)q^n\equiv4\dfrac{f_{3}^3}{f_1}\pmod{8}.\end{equation}
The remaining part of the proof is similar to proofs of the identies \eqref{t51}-\eqref{t53}.
\end{proof}

\begin{theorem}For all integers $n\geq 0$, $\alpha\geq 0$ and $1\le j\le(p-1)$, we have
 \begin{equation}\label{t131}
             \sum_{n=0}^{\infty}\overline{p}_{6,12}(2\cdot p^{2\alpha}(24n+1))q^n\equiv  4f_1\pmod{8},
            \end{equation}
            \begin{equation}\label{t132}
             \overline{p}_{6,12}(2\cdot p^{2\alpha+1}(24(pn+j)+1))\equiv  0\pmod{8}.
                \end{equation}
\end{theorem}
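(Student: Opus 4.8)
The plan is to establish \eqref{t131} at $\alpha=0$ by a short chain of $2$-dissections starting from the mod-$8$ congruence \eqref{a161}, $\sum_{n\ge0}\overline{p}_{6,12}(6n+2)q^n\equiv 4f_3^3/f_1\pmod 8$, and then to lift it to every $\alpha$ by a $p$-adic induction (for a prime $p\ge5$) driven by the dissection \eqref{pp}, in exact parallel with the proofs of \eqref{t81} and \eqref{t62}. The congruence \eqref{t132} then falls out as a by-product of the induction step.

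For the base case I would feed \eqref{a161} into the $2$-dissection \eqref{5}, writing the right side as an even part $4f_4^3f_6^2/(f_2^2f_{12})$ plus an odd part $4q\,f_{12}^3/f_4$. Extracting the terms of even exponent and replacing $q^2$ by $q$ gives $\sum_{n\ge0}\overline{p}_{6,12}(12n+2)q^n\equiv 4f_2^3f_3^2/(f_1^2f_6)\pmod 8$. Since the prefactor is $4$, only the inner product modulo $2$ is relevant: by \eqref{fp1} one has $f_2^3\equiv f_2f_4$, $f_3^2\equiv f_6$ and $f_1^2\equiv f_2$ modulo $2$, so the quotient collapses to $f_4$ and $\sum_{n\ge0}\overline{p}_{6,12}(12n+2)q^n\equiv 4f_4\pmod 8$. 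As $f_4$ is supported on exponents divisible by $4$, extracting the terms $q^{4n}$ and replacing $q^4$ by $q$ yields $\sum_{n\ge0}\overline{p}_{6,12}(48n+2)q^n\equiv 4f_1\pmod 8$, which is \eqref{t131} at $\alpha=0$.

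For the induction step I would assume \eqref{t131} for a fixed $\alpha$ and substitute \eqref{pp} for $f_1$ on the right. The ``Furthermore'' clause of the lemma carrying \eqref{pp} forces the exponents $(3k^2+k)/2$ of the off-diagonal theta summands to be incongruent to $(p^2-1)/24$ modulo $p$, so extracting the terms $q^{pn+(p^2-1)/24}$ isolates the single diagonal contribution $(-1)^{(\pm p-1)/6}q^{(p^2-1)/24}f_{p^2}$. The decisive point is that the surviving coefficient is $4$ and we work modulo $8$, whence the sign $(-1)^{(\pm p-1)/6}$ is immaterial because $-4\equiv4\pmod 8$. Dividing by $q^{(p^2-1)/24}$, replacing $q^p$ by $q$, and using the index identity $24(pn+(p^2-1)/24)+1=p(24n+p)$ produces the intermediate congruence $\sum_{n\ge0}\overline{p}_{6,12}(2p^{2\alpha+1}(24n+p))q^n\equiv 4f_p\pmod 8$. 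Extracting $q^{pn}$ and replacing $q^p$ by $q$ then returns the $\alpha+1$ case of \eqref{t131}, closing the induction; and extracting $q^{pn+j}$ for $1\le j\le p-1$ from the same intermediate congruence annihilates the right side (as $f_p$ is supported on multiples of $p$) and delivers \eqref{t132}.

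The $2$-dissections of the base case are mechanical, so the step demanding the most care is the $p$-adic induction: one must verify through the arithmetic non-congruence quoted in the lemma behind \eqref{pp} that precisely the diagonal term survives the extraction, and combine this with the observation that the factor $4$ renders the dissection sign harmless modulo $8$. These two facts are exactly what make the recursion self-reproducing and hence valid for every $\alpha\ge0$.
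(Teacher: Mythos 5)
Your proposal is correct and follows essentially the same route as the paper: the same chain of $2$-dissections of $4f_3^3/f_1$ down to $4f_1$ for the base case (you merely extract $q^{4n}$ from $4f_4$ in one step where the paper passes through $4f_2^2$ and $4f_2$), and the same $p$-dissection induction via \eqref{pp} that the paper invokes by reference to the proofs of \eqref{t81}--\eqref{t82}, with your observation that $-4\equiv 4\pmod 8$ neutralizes the sign being a welcome extra precision. Note only that the extraction of $q^{pn+j}$ naturally yields $\overline{p}_{6,12}\bigl(2p^{2\alpha+1}(24(pn+j)+p)\bigr)\equiv 0\pmod 8$, matching the form of \eqref{t82}, so the ``$+1$'' in the displayed \eqref{t132} appears to be a typographical slip in the statement rather than a defect of your argument.
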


\begin{proof}
 Extracting the terms involving $q^{3n+2}$ from \eqref{z2}, we obtain
 \begin{equation}\label{a13}
 \sum_{n=0}^{\infty}\overline{p}_{6,12}(3n+2)q^n=4\dfrac{f_2^4f_{6}^3f_8}{f_1^6f_4^3}.
 \end{equation}
 Employing \eqref{fp2} in \eqref{a13}, we obtain
 \begin{equation}\label{a15}
 \sum_{n=0}^{\infty}\overline{p}_{6,12}(3n+2)q^n\equiv4\dfrac{f_{6}^3}{f_2}\pmod{8}.\end{equation}
 Extracting the terms involving $q^{2n}$ from \eqref{a15}, we obtain
 \begin{equation}\label{a16}
 \sum_{n=0}^{\infty}\overline{p}_{6,12}(6n+2)q^n\equiv4\dfrac{f_{3}^3}{f_1}\pmod{8}.\end{equation} 
 Employing \eqref{5} in \eqref{a16}, we obtain
 \begin{equation}\label{a17}
 \sum_{n=0}^{\infty}\overline{p}_{6,12}(6n+2)q^n\equiv4\left( \dfrac{f_4^3f_6^2}{f_2^2f_{12}}+q\dfrac{f_{12}^3}{f_4}\right) \pmod{8}.\end{equation} 
 Extracting the terms involving $q^{2n}$ from \eqref{a17} and using \eqref{fp1}, we obtain
 \begin{equation}\label{a18}
 \sum_{n=0}^{\infty}\overline{p}_{6,12}(12n+2)q^n \equiv 4 f_2^2\pmod{8}.
 \end{equation}
 Again extracting the terms involving $q^{2n}$ from \eqref{a18} and using \eqref{fp1}, we obtain
 \begin{equation}\label{a19}
 \sum_{n=0}^{\infty}\overline{p}_{6,12}(24n+2)q^n\equiv  4f_2\pmod{8}.
 \end{equation}
 Again, extracting the terms involving $q^{2n}$ from \eqref{a19}, we obtain
 \begin{equation}\label{a20}
 \sum_{n=0}^{\infty}\overline{p}_{6,12}(48n+2)q^n\equiv  4f_1\pmod{8}.
 \end{equation}
The rest of the proof is similar to the proof of the identities \eqref{t81}-\eqref{t82}.  
 \end{proof}

 \section{Congruences for $\overline{p}_{8,16}(n)$}
 \begin{theorem} Let $j \in \{0,2,3,4\}$ and $k \in \{0,1,3,4,5,6\}$ . Then for all integers $\alpha \geq0$ and $\beta \geq0$, we have 
  \begin{equation}\label{w1}
   \sum_{n=0}^{\infty}\overline{p}_{8,,16}\Big(8\cdot5^{2\alpha}\cdot7^{2\beta}(n)+3\cdot5^{2\alpha}\cdot7^{2\beta}\Big)q^n\equiv8f_1^9 \pmod{16}, 
   \end{equation}  
  \begin{equation}\label{w2}
  \sum_{n=0}^{\infty}\overline{p}_{8,16}\Big(8\cdot5^{2\alpha+1}\cdot7^{2\beta}(n)+7\cdot5^{2\alpha+1}\cdot7^{2\beta}\Big)q^n\equiv8qf_5^9 \pmod{16}, 
    \end{equation}
  \begin{equation}\label{w3}
      \overline{p}_{8,16}\Big(8\cdot5^{2\alpha+1}\cdot7^{2\beta}(5n+j)+7\cdot5^{2\alpha+1}\cdot7^{2\beta}\Big)\equiv0 \pmod{16}, 
     \end{equation}      
 \begin{equation}\label{w4}
       \sum_{n=0}^{\infty}\overline{p}_{8,16}\Big(8\cdot5^{2\alpha}\cdot7^{2\beta+1}(n)+5\cdot5^{2\alpha}\cdot7^{2\beta+1}\Big)q^n\equiv8q^2f_7^9 \pmod{16}, 
     \end{equation} 
  \begin{equation}\label{w5}
        \overline{p}_{8,16}\Big(8\cdot5^{2\alpha}\cdot7^{2\beta+1}(7n+k)+5\cdot5^{2\alpha}\cdot7^{2\beta+1}\Big)\equiv 0 \pmod{16}. 
      \end{equation}

  \end{theorem}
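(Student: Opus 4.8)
The plan is to first compute the generating function for $\overline{p}_{8,16}(n)$ and then to reduce, through a short chain of $2$-dissections, to the single congruence $\sum_{n=0}^{\infty}\overline{p}_{8,16}(8n+3)q^n\equiv 8f_1^9\pmod{16}$; this is precisely the shape of the base case \eqref{q13} that powers the proof of \eqref{y1}--\eqref{y5}, so once it is in hand the remainder of the argument is inherited verbatim. Setting $j=8$, $k=16$ in \eqref{p10} and simplifying with \eqref{fff}, using $(-q;q)_\infty=f_2/f_1$, $(q^8;q^{16})_\infty=f_8/f_{16}$ and the identity $(-q;q^2)_\infty=f_2^2/(f_1f_4)$ applied at $q^8$, i.e. $(-q^8;q^{16})_\infty=f_{16}^2/(f_8f_{32})$, I would obtain $\sum_{n=0}^{\infty}\overline{p}_{8,16}(n)q^n=f_2f_8^2f_{32}/(f_1^2f_{16}^3)$.

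Next I would apply the $2$-dissection \eqref{1} of $1/f_1^2$ and collect the odd part, which gives $\sum_{n=0}^{\infty}\overline{p}_{8,16}(2n+1)q^n=2f_2^2f_4f_{16}/(f_1^4f_8)$. Feeding this into the $2$-dissection \eqref{2} of $1/f_1^4$ and again extracting the odd part produces $\sum_{n=0}^{\infty}\overline{p}_{8,16}(4n+3)q^n=8f_2^3f_4^3f_8/f_1^8$. Because of the explicit factor $8$ it then suffices to reduce the remaining eta-quotient modulo $2$: by \eqref{fp3} one has $f_1^8\equiv f_2^4$, and repeated use of \eqref{fp1} (namely $f_4\equiv f_2^2$ and $f_8\equiv f_2^4$ modulo $2$) collapses $f_2^3f_4^3f_8/f_1^8$ to $f_2^9$, so that $\sum_{n=0}^{\infty}\overline{p}_{8,16}(4n+3)q^n\equiv 8f_2^9\pmod{16}$. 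Extracting the terms in $q^{2n}$ now yields the desired base case $\sum_{n=0}^{\infty}\overline{p}_{8,16}(8n+3)q^n\equiv 8f_1^9\pmod{16}$, which is the $\alpha=\beta=0$ instance of \eqref{w1} and literally coincides with the $\overline{p}_{4,8}$ base case \eqref{q13}.

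From here the argument runs exactly as in the proof of \eqref{y1}--\eqref{y5}. I would carry out two nested inductions: the $5$-dissection \eqref{g1} of $f_1$, substituted into $8f_1^9$ and read modulo $2$, advances $\alpha$ to $\alpha+1$ and on the way produces \eqref{w2}, while the $7$-dissection \eqref{u7} handles the $\beta$-induction and produces \eqref{w4}. The vanishing statements \eqref{w3} and \eqref{w5} then drop out by extracting the complementary residues $q^{5n+j}$, $j\in\{0,2,3,4\}$, from \eqref{w2} and $q^{7n+k}$, $k\in\{0,1,3,4,5,6\}$, from \eqref{w4}, all of whose coefficients vanish identically. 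The one point deserving care, and the only genuine obstacle, is the opening reduction to $8f_1^9$; the subsequent $5$- and $7$-dissections of the ninth power are made transparent by the observation that modulo $2$ one has $f_1^9\equiv f_1f_8$ (since $f_1^8\equiv f_8$), so that \eqref{g1} and \eqref{u7} need only isolate a single pentagonal-type residue in a product of two eta-quotients, exactly the computation already performed for \eqref{y1}--\eqref{y5}.
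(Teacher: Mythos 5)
Your proposal is correct and follows essentially the same route as the paper: the identical chain \eqref{p10} $\to$ $f_2f_8^2f_{32}/(f_1^2f_{16}^3)$, then \eqref{1} and \eqref{2} with odd-part extractions to reach $8f_2^3f_4^3f_8/f_1^8\equiv 8f_2^9\pmod{16}$ and hence the base case $8f_1^9$, after which the paper likewise defers to the induction already carried out for \eqref{y1}--\eqref{y5} via \eqref{g1} and \eqref{u7}. No substantive differences.
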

 \begin{proof}
 Setting $j=8$ and $k=16$ in \eqref{p10}, we note that
 	 \begin{equation*}
 	 \sum_{n=0}^{\infty}\overline{p}_{8,16}(n)q^n=\dfrac{(-q;q)_{\infty}(q^8;q^{16})_{\infty}}{(q;q)_{\infty}(-q^8;q^{16})_{\infty}}.\end{equation*}
 	 Applying elementary $q$-operation and using \eqref{fff}, we obtain
 	 \begin{equation}\label{b1}
 	 \sum_{n=0}^{\infty}\overline{p}_{8,16}(n)q^n=\dfrac{f_2f_8^2f_{32}}{f_1^2f_{16}^3}.
 	 \end{equation}
 	 Employing \eqref{1} in \eqref{b1}, we obtain
 	 \begin{equation}\label{b3}
 	 \sum_{n=0}^{\infty}\overline{p}_{8,16}(n)q^n= \dfrac{f_8^7f_{32}}{f_2^4f_{16}^5}+2q\dfrac{f_4^2f_8f_{32}}{f_2^4f_{16}}.
 	 \end{equation}
 	 Extracting the terms involving $q^{2n+1}$ from \eqref{b3}, we obtain
 	 \begin{equation}\label{b4}
 	\sum_{n=0}^{\infty}\overline{p}_{8,16}(2n+1)q^n=2\dfrac{f_2^2f_4f_{16}}{f_1^4f_{8}}.
 	 \end{equation}
 	 Employing \eqref{2} in \eqref{b4}, we obtain
 	 \begin{equation}\label{b5}
 	 \sum_{n=0}^{\infty}\overline{p}_{8,16}(2n+1)q^n=2\dfrac{f_2^2f_4f_{16}}{f_{8}}\left( \dfrac{f_4^{14}}{f_2^{14}f_{8}^4}+4q\dfrac{f_4^2f_{8}^4}{f_2^{10}}\right).
 	 \end{equation}
 	 Extracting the terms involving $q^{2n+1}$ from \eqref{b5}, we obtain 	 
 	 \begin{equation}\label{b7}
 	  	 \sum_{n=0}^{\infty}\overline{p}_{8,16}(4n+3)q^n=8\dfrac{f_2^3f_4^3f_{8}}{f_1^{8}}.
 	  	 \end{equation}
 	   Employing \eqref{fp1} in \eqref{b7}, we obtain 	  	 
 	  	  	  	 \begin{equation}\label{b9}
 	  	  	  	  	  	  	  	 \sum_{n=0}^{\infty}\overline{p}_{8,16}(4n+3)q^n=8f_2^9\pmod{16}.
 	  	  	  	  	  	  	  	 \end{equation}
 Extracting the terms involving $q^{2n}$ from \eqref{b9}, we obtain	  	  	  	  	  	  	  	 
 \begin{equation}\label{b10}
 \sum_{n=0}^{\infty}\overline{p}_{8,16}(8n+3)q^n=8f_1^9\pmod{16}.
 \end{equation}	  	  	  	  	  	  	  	 
  The rest of the proof is similar to proofs of the identies \eqref{y1}-\eqref{y5}.
 	 \end{proof}


\begin{thebibliography}{99} 
 
  \bibitem{bc} B. C. Berndt: \textit{Ramanujan’s Notebooks, Part III}, Springer-Verlag, New York, (1991).
  
  
  
 
 
 \bibitem{cu} S. P. Cui and N. S. S. Gu: Arithmetic properties of $\ell$-regular partitions, {\it Adv.Appl.Math.}, {\bf51} (2013), 507-523.
 
  \bibitem{MD} M. D. Hirschhorn: An identity of Ramanujan and Applications, in $q$-series from a Contemporary Perspective, Contemporary Mathematics, {\it Amer. Math. Soc., Providence} Vol. {\bf 254} (2000).
 
 
 
 \bibitem{pq}M. D. Hirschhorn:{\it The Power of $q$, A personal journey}, Developments in Mathematics, vol. {\bf49}, Springer International Publishing, (2017) .

\bibitem{md}M. D. Hirschhorn and J. A. Sellers, Arithmetic properties of partitions with odd distinct, {\it  Ramanujan J., }{\bf  22}
 (2010), 273-284.
 \bibitem{ms} M.S.M Naika,  T. Harishkumar
   and T.N Veeranayaka: On some congruences for (j; k)-regular
   overpartitions, {\it Gulf J. Math.}, {\bf 10}(1) (2021), 43-68.
 
 \bibitem{lw} L. Wang: Arithmetic identities and congruences for partition triples with 3-cores, {\it Int. J. Number Theory.}, {\bf 12}(4) (2016), 995-1010.
 
 
   
  
 \end{thebibliography}
 \end{document}